\definecolor{Red}{cmyk}{0,1,1,0}
\definecolor{verde}{cmyk}{1,0,1,0}
\definecolor{loka}{cmyk}{.5,0,1,.5}
\definecolor{azul}{cmyk}{1,1,0,0}
\numberwithin{equation}{section}
\newcommand{\be}{\begin{equation}}
\newcommand{\ee}{\end{equation}}
\newtheorem{theorem}{Theorem}
\newtheorem{definition}{Definition}
\begin{document}
\title{The Minkowski's inequality by means of a generalized fractional integral}
\author{J. Vanterler da C. Sousa$^1$}
\address{$^1$ Department of Applied Mathematics, Institute of Mathematics,
 Statistics and Scientific Computation, University of Campinas --
UNICAMP, rua S\'ergio Buarque de Holanda 651,
13083--859, Campinas SP, Brazil\newline
e-mail: {\itshape \texttt{ra160908@ime.unicamp.br, capelas@ime.unicamp.br }}}

\author{E. Capelas de Oliveira$^1$}

\begin{abstract}  We use the definition of a fractional integral, recently proposed by Katugampola, to establish a generalization of the reverse Minkowski's inequality. We show two new theorems associated with this inequality, as well as state and show other inequalities related to this fractional operator.

\vskip.5cm
\noindent
\emph{Keywords}: Minkowski's inequality, Generalized fractional integral.
\newline 
MSC 2010 subject classifications. 26A33; 26A39; 26A42.
\end{abstract}
\maketitle


\section{Introduction}
Studies involving integral inequalities are important in several areas of science: mathematics, physics, engineering, among others, in particular we mention: initial value problem, linear transformation stability, integral-differential equations, and impulse equations \cite{BCAL,BDSP}.

The space of $p$-integrable functions $L^p(a,b)$ play a relevant role in the study of inequalities involving integrals and sums. Further, it is possible to extend this space of $p$-integrable functions, to the space of the measurable Lebesgue functions, denoted by $X_{c}^{p} (a,b)$, in which the space $L^{p}(a,b)$ is contained \cite{AHMJ}. Thus, new results involving integral inequalities have been possible and consequently, some applications have been made \cite{BCAL,BDSP}. We mention few of them, the inequalities of: Minkowski, Hölder, Hardy, Hermite-Hadamard, Jensen, among others \cite{HO,SEMD,SWK,BL,HO1,BP,OJC}.

On the other hand, non-integer order calculus, usually referred to as fractional calculus, is used to generalizes of integrals and derivatives, in particular integrals involving inequalities. There are many definitions of fractional integrals, for example: Riemann-Liouville, Hadamard, Liouville, Weyl, Erdéryi-Kober and Katugampola \cite {AHMJ,RHM,IP,UNT1}. Recently, Khalil et al. \cite{KRHA} and Adeljawad \cite{ABT}, introduced the local conformable fractional integrals and derivatives. From such fractional integrals, one obtains generalizations of the inequalities: Hadamard, Hermite-Hadamard, Opial, Gruss, Ostrowski, among others \cite{BMT,ADR,OMKH,CHFG,WJXM,SMBD,CHKU}.

Recently, Katugampola \cite{UNT} proposed a fractional integral unifying other well known ones: Riemann-Liouville, Hadamard, Weyl, Liouville and Erdélyi-Kober. Motivated by this formulation, we present a generalization of the reverse Minkowski's inequality \cite{SEM,DZO, CHUVP}, using the fractional integral introduced by Katugampola. We point out that studies in this direction, involving fractional integrals, are growing in several branches of mathematics \cite{OMKH,DRSS,YHK}.

The work is organized as follows: In section 2, we present the definition of the fractional integral, as well as its particular cases. We present the main theorems involving the reverse Minkowski's inequality, as well as the suitable spaces for such definitions. In section 3, our main result, we propose the reverse Minkowski's inequality using the fractional integral. In section 4, we discuss other inequalities involving this fractional integral. Concluding remarks close the article.

\section{Prelimiaries}

In this section, we present the reverse Minkowski's inequality theorem associated with the classical Riemann integral and its respective generalization via Riemann-Liouville and Hadamard fractional integrals. In addition, we present the fractional integral introduced by Katugampola, and we conclude with a theorem in order to recover particular cases.

Erhan et al. \cite{SEMD} address the inequalities of Hermite-Hadamard and reverse Minkowski for two functions $f$ and $g$ by means of the classical Riemann integral. On the other hand, Lazhar \cite{BL} also proposed a work related to the inequality involving integrals, that is, Hardy's inequality and the reverse Minkowski's inequality. Two theorems below were motivation for the works performed so far, via the Riemann-Liouville and Hadamard integrals, involving the reverse Minkowski's inequality.

\begin{definition} The space ${X}_{c}^{p}(a,b)$ $(c\in\mathbb{R}, 1\leq{p}\leq{\infty})$ consists of those complex-valued Lebesgue measurable functions $f$ on $(a,b)$, for which ${\lVert f \rVert}_{{X_{c}^{p}}}<\infty$ with
$${\lVert f \rVert}_{{X_{c}^{p}}}=\left(\int_{a}^{b}|x^{c}f(x)|^{p}\,\frac{dx}{x}\right)^{1/p}\; (1\leq{p}<{\infty})$$
and
$$\left\|f\right\|_{X_{c}^{\infty}}=\sup \textnormal{ess}_{x\in(a,b)}\,[x^{c}|f(x)|].$$
\end{definition}

In particular, when $c=1/p$ the space ${X}_{c}^{p}(a,b)$ coincides with the space $L^{p}(a,b)$ {\rm\cite{AHMJ}}.

\begin{theorem} Let $f,g\in L^{p}(a,b)$ be two positive functions, with $1\leq{p}\leq{\infty}$, $0<\displaystyle\int_{a}^{b}f^{p}\left(t\right) dt<\infty $ and $0<\displaystyle\int_{a}^{b}g^{p}\left( t\right) dt<\infty $. If $0<m\leq \displaystyle\frac{f\left( t\right) }{g\left( t\right) }\leq M$, for $m,M\in\mathbb{R}^{*}_{+}$ and $\forall t \in[a,b]$, then
\begin{equation}\label{A1}
\left( \int_{a}^{b}f^{p}\left( t\right) dt\right) ^{\frac{1}{p}}+\left(
\int_{a}^{b}g^{p}\left( t\right) dt\right) ^{\frac{1}{p}}\leq c_{1}\left(
\int_{a}^{b}\left( f^{p}+g^{p}\right) \left( t\right) dt\right) ^{\frac{1}{p}},
\end{equation}
with $c_{1}=\displaystyle\frac{M\left( m+1\right) +\left( M+1\right) }{\left( m+1\right)\left( M+1\right) }$ {\rm\cite{SEMD}}. 
\end{theorem}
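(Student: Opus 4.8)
The plan is to bound the two terms $\left(\int_a^b f^p(t)\,dt\right)^{1/p}$ and $\left(\int_a^b g^p(t)\,dt\right)^{1/p}$ \emph{separately}, each by a suitable multiple of the right-hand side quantity $\left(\int_a^b (f^p+g^p)(t)\,dt\right)^{1/p}$, and then to add the two estimates so that the constants combine into $c_1=\frac{M}{M+1}+\frac{1}{m+1}=\frac{M(m+1)+(M+1)}{(m+1)(M+1)}$. The whole argument rests on converting the two-sided hypothesis $0<m\le f/g\le M$ into two pointwise comparisons between $f$, $g$ and $f+g$, which are then raised to the power $p$ and integrated.

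First I would exploit the upper bound. From $f(t)/g(t)\le M$ and $g>0$ one gets $f\le Mg$ pointwise, whence $(M+1)f=Mf+f\le Mf+Mg=M(f+g)$, that is $f\le \frac{M}{M+1}(f+g)$ for every $t\in[a,b]$. Raising to the power $p\ge 1$ (a monotone operation on nonnegative reals), integrating over $[a,b]$, and taking $p$-th roots yields
\begin{equation*}
\left(\int_a^b f^p(t)\,dt\right)^{1/p}\le \frac{M}{M+1}\left(\int_a^b (f^p+g^p)(t)\,dt\right)^{1/p}.
\end{equation*}

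Symmetrically, the lower bound $f/g\ge m$ gives $mg\le f$, so $(m+1)g=mg+g\le f+g$, i.e. $g\le\frac{1}{m+1}(f+g)$ pointwise; the same raise--integrate--root procedure produces
\begin{equation*}
\left(\int_a^b g^p(t)\,dt\right)^{1/p}\le \frac{1}{m+1}\left(\int_a^b (f^p+g^p)(t)\,dt\right)^{1/p}.
\end{equation*}
Adding the last two displays and collecting the constant $\frac{M}{M+1}+\frac{1}{m+1}=c_1$ gives exactly \eqref{A1}.

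The main obstacle is making the passage from the pointwise comparisons to the $L^p$ estimates rigorous and uniform in $p$: I must justify that raising a pointwise inequality of nonnegative measurable functions to the power $p$ preserves it, that monotonicity of the Lebesgue integral and of $t\mapsto t^{1/p}$ then applies, and that the hypotheses $0<\int_a^b f^p<\infty$ and $0<\int_a^b g^p<\infty$ guarantee all quantities are finite and that the divisions by $g$ are legitimate (this is where positivity of $f,g$ and of $m$ enters). The boundary case $p=\infty$ must be handled separately, replacing the integrals by the essential supremum of Definition 1 and using $\|f\|_\infty\le\frac{M}{M+1}\|f+g\|_\infty$ together with the analogous bound for $g$, after which the same constant $c_1$ reappears.
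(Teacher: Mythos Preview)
Your overall strategy---bound $f$ and $g$ separately against $f+g$ using the two endpoints of $m\le f/g\le M$, then add---is exactly the argument the paper uses. (The paper does not prove this preliminary theorem itself; it cites \cite{SEMD}. But its proof of the generalized Theorem~8 follows precisely the route you describe: derive $(M+1)f\le M(f+g)$ and $(m+1)g\le f+g$ pointwise, raise to the $p$-th power, integrate against the kernel, take $p$-th roots, and add.)

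There is, however, a genuine slip in your displayed inequalities. From $f\le\frac{M}{M+1}(f+g)$ you correctly say ``raise to the power $p$'', which yields
\[
f^{p}\le\Bigl(\tfrac{M}{M+1}\Bigr)^{p}(f+g)^{p},
\]
and after integrating and taking $p$-th roots this gives
\[
\Bigl(\int_a^b f^{p}\Bigr)^{1/p}\le\frac{M}{M+1}\Bigl(\int_a^b (f+g)^{p}\Bigr)^{1/p}.
\]
You instead wrote $(f^{p}+g^{p})$ on the right, copying the right-hand side of the stated inequality. That substitution is not justified: for $p>1$ one has $(f+g)^{p}\ge f^{p}+g^{p}$, so the inequality goes the wrong way to pass from one to the other. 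In fact the statement as printed, with $f^{p}+g^{p}$ inside the integral, is a typo and is false: take $f=g\equiv1$ on $[0,1]$ and $p=2$, so $m=M=1$ and $c_1=1$; the left side is $2$ while the right side is $\sqrt{2}$. The intended right-hand side is $\bigl(\int_a^b (f+g)^{p}\bigr)^{1/p}$, consistent with Theorems~3, 5 and~8 in the paper, and your argument proves that corrected version.
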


\begin{theorem} Let $f,g\in L^{p}(a,b)$ be two positive functions, with $1\leq{p}\leq{\infty}$, $0<\displaystyle\int_{a}^{b}f^{p}\left( t\right) dt<\infty $ and $0<\displaystyle\int_{a}^{b}g^{p}\left( t\right) dt<\infty $. If $0<m\leq \displaystyle\frac{f\left( t\right) }{g\left( t\right) }\leq M$, for $m,M\in\mathbb{R}^{*}_{+}$ and $\forall t \in[a,b]$, then
\begin{equation}\label{A2}
\left( \int_{a}^{b}f^{p}\left( t\right) dt\right) ^{\frac{2}{p}}+\left(
\int_{a}^{b}g^{p}\left( t\right) dx\right) ^{\frac{2}{p}}\geq c_{2}\left(
\int_{a}^{b}f^{p}\left( t\right) dx\right) ^{\frac{1}{p}}\left(
\int_{a}^{b}g^{p}\left( t\right) dx\right) ^{\frac{1}{p}},
\end{equation}
with $c_{2}= \displaystyle\frac{\left( M+1\right) \left( m+1\right) }{M}-2 $ {\rm\cite{SEMD}}. 
\end{theorem}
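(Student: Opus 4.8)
The plan is to deduce \eqref{A2} from the classical Minkowski inequality together with the two one-sided estimates that already power \eqref{A1}. Write $\mathcal F := \bigl(\int_a^b f^p(t)\,dt\bigr)^{1/p}$ and $\mathcal G := \bigl(\int_a^b g^p(t)\,dt\bigr)^{1/p}$; both are finite and strictly positive by hypothesis. Set also $I := \bigl(\int_a^b (f+g)^p(t)\,dt\bigr)^{1/p}$, which is finite since $f,g\in L^p(a,b)$ and positive since $f>0$. (For $p=\infty$ every step below runs verbatim with integrals replaced by essential suprema and Minkowski replaced by the triangle inequality for $\|\cdot\|_\infty$.)

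First I would turn $0<m\le f/g\le M$ into two pointwise inequalities. From $f\le Mg$ one gets $\tfrac{M+1}{M}\,f=f+\tfrac1M f\le f+g$ on $[a,b]$; raising to the $p$-th power, integrating, and taking $p$-th roots gives $\tfrac{M+1}{M}\,\mathcal F\le I$, that is $\mathcal F\le \tfrac{M}{M+1}\,I$. Symmetrically, from $f\ge mg$ one gets $(m+1)\,g=mg+g\le f+g$, hence $(m+1)\,\mathcal G\le I$, i.e. $\mathcal G\le\tfrac{1}{m+1}\,I$. Multiplying the two estimates yields
\begin{equation*}
\mathcal F\,\mathcal G\;\le\;\frac{M}{(M+1)(m+1)}\,I^{2},\qquad\text{equivalently}\qquad \frac{(M+1)(m+1)}{M}\,\mathcal F\,\mathcal G\;\le\;I^{2}.
\end{equation*}

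Next I would apply the classical Minkowski inequality (legitimate because $p\ge1$), $I\le\mathcal F+\mathcal G$, and square it: $I^{2}\le(\mathcal F+\mathcal G)^{2}=\mathcal F^{2}+2\,\mathcal F\,\mathcal G+\mathcal G^{2}$. Combining with the previous display,
\begin{equation*}
\frac{(M+1)(m+1)}{M}\,\mathcal F\,\mathcal G\;\le\;I^{2}\;\le\;\mathcal F^{2}+2\,\mathcal F\,\mathcal G+\mathcal G^{2},
\end{equation*}
so subtracting $2\,\mathcal F\,\mathcal G$ from the outer terms gives $\mathcal F^{2}+\mathcal G^{2}\ge\bigl(\tfrac{(M+1)(m+1)}{M}-2\bigr)\mathcal F\,\mathcal G=c_2\,\mathcal F\,\mathcal G$, which is \eqref{A2}.

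I do not expect a real obstacle here: the whole argument is driven by the two one-sided bounds $\mathcal F\le\tfrac{M}{M+1}I$ and $\mathcal G\le\tfrac{1}{m+1}I$ (essentially the ones behind \eqref{A1}), and the only genuine "move" is to multiply them and to square the Minkowski bound rather than use it directly. What remains is bookkeeping: checking that $I$ is finite and positive so the manipulations are legal, that monotonicity of $x\mapsto x^p$ and of the integral justify passing from the pointwise to the integral inequalities, and that the constant emerging from $\tfrac{M}{M+1}\cdot\tfrac{1}{m+1}$ followed by subtracting $2$ is exactly $c_2=\tfrac{(M+1)(m+1)}{M}-2$. The $p=\infty$ case should be dispatched in one line.
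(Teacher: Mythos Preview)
Your argument is correct and mirrors exactly the route the paper takes: the paper does not reproduce a separate proof of this classical statement (it is quoted from \cite{SEMD}), but its proof of the fractional analogue (Theorem~9) proceeds by multiplying the two one-sided bounds $\mathcal F\le \tfrac{M}{M+1}I$ and $\mathcal G\le \tfrac{1}{m+1}I$, then bounding $I^2$ via Minkowski by $(\mathcal F+\mathcal G)^2$ and subtracting $2\mathcal F\mathcal G$. Your proof is the specialization of that argument to the ordinary Lebesgue integral, with the same key steps in the same order.
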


We present the definitions of the fractional integrals that will be useful in the development of the article: Riemann-Liouville fractional integral, Hadamard integral, Erdélyi-Kober integral, Katugampola integral, Weyl integral and Liouville integral. 

\begin{definition} Let $[a,b]$ $(-\infty<a<b<\infty)$ be a finite interval on the real-axis $\mathbb{R}$. The Riemann-Liouville fractional integrals $(\mbox{left-sided and right-sided})$ of order $\alpha\in\mathbb{C}$, $\rm Re(\alpha)>0$, are defined by
\begin{equation}\label{A3}
J_{a^{+}}^{\alpha }f\left( x\right) :=\frac{1}{\Gamma \left( \alpha
\right) }\int_{a}^{x}\frac{f\left( t\right) }{\left( x-t\right) ^{1-\alpha }}dt, \text{ }x>a
\end{equation}
and
\begin{equation}\label{A4}
J_{b^{-}}^{\alpha }f\left( x\right) :=\frac{1}{\Gamma \left( \alpha \right) }\int_{x}^{b}\frac{f\left( t\right) }{\left( t-x\right) ^{1-\alpha }}dt, \text{ }x<b,
\end{equation}
respectively {\rm\cite{AHMJ,IP}}.
\end{definition}

\begin{definition} Let $(a,b)$ $(0\leq a<b<\infty)$ be a finite or infinite interval on the half-axis $\mathbb{R}^{+}$. The Hadamard fractional integrals $(\mbox{left-sided and right-sided})$ of order $\alpha\in\mathbb{C}$, $\rm Re(\alpha)>0$ of a real function $f\in L^{p}(a,b)$ are defined by
\begin{equation}\label{A5}
H_{a^{+}}^{\alpha }f\left( x\right) :=\frac{1}{\Gamma \left( \alpha
\right) }\int_{a}^{x}\left( \log \frac{x}{t}\right) ^{\alpha -1}\frac{%
f\left( t\right) }{t}dt, \text{ }a<x<b
\end{equation}
and
\begin{equation}\label{A6}
H_{b^{-}}^{\alpha }f\left( x\right) :=\frac{1}{\Gamma \left( \alpha
\right) }\int_{x}^{b}\left( \log \frac{t}{x}\right) ^{\alpha -1}\frac{%
f\left( t\right) }{t}dt\alpha, \text{ }a<x<b,
\end{equation}
respectively {\rm\cite{AHMJ,IP}}.
\end{definition}

\begin{definition} Let $(a,b)$ $(-\infty\leq a<b\leq\infty)$ be a finite or infinite interval or  half-axis $\mathbb{R}^{+}$. Also let $\rm Re(\alpha)>0$, $\sigma>0$ and $\eta\in\mathbb{C}$. The Erdélyi-Kober fractional integrals $(\mbox{left-sided and right-sided})$ of order $\alpha\in\mathbb{C}$ of a real function $f\in L^{p}(a,b)$ are defined by
\begin{equation}\label{A7}
I_{a^{+},\sigma ,\eta }^{\alpha }f\left( x\right) :=\frac{\sigma
x^{-\sigma \left( \alpha +\eta \right) }}{\Gamma \left( \alpha \right) }%
\int_{a}^{x}\frac{t^{\sigma \left( \eta +1\right) -1}}{\left( x^{\sigma
}-t^{\sigma }\right) ^{1-\alpha }}f\left( t\right) dt, \text{ } 0\leq a<x<b\leq\infty
\end{equation}
and
\begin{equation}\label{A8}
I_{b^{-},\sigma ,\eta }^{\alpha }f\left( x\right) :=\frac{\sigma
x^{\sigma \eta }}{\Gamma \left( \alpha \right) }\int_{x}^{b}\frac{t^{\sigma
\left( 1-\eta -\alpha \right) -1}}{\left( t^{\sigma }-x^{\sigma }\right)
^{1-\alpha }}f\left( t\right) dt, \text{ } 0\leq a<x<b\leq\infty,
\end{equation}
respectively {\rm\cite{AHMJ,IP}}.
\end{definition}

\begin{definition} Let $[a,b]\subset\mathbb{R}$ be a finite interval. Then the Katugampola fractional integrals $(\mbox{left-sided and right-sided})$ of order $\alpha\in\mathbb{C}$, $\rho>0$, $\rm Re(\alpha)>0$ of a real function $f\in X^{p}_{c}(a,b)$ are defined by
\begin{equation}\label{A9}
^{\rho }I_{a^{+}}^{\alpha }f\left( x\right) :=\frac{\rho ^{1-\alpha
}}{\Gamma \left( \alpha \right) }\int_{a}^{x}\frac{t^{\rho -1}}{\left(
x^{\rho }-t^{\rho }\right) ^{1-\alpha }}f\left( t\right) dt, \text{ }x>a
\end{equation}
and
\begin{equation}\label{10}
^{\rho }I_{b^{-}}^{\alpha }f\left( x\right) :=\frac{\rho ^{1-\alpha
}}{\Gamma \left( \alpha \right) }\int_{x}^{b}\frac{t^{\rho -1}}{\left(
t^{\rho }-x^{\rho }\right) ^{1-\alpha }}f\left( t\right) dt, \text{ } x<b,
\end{equation}
respectively {\rm\cite{UNT1}}.
\end{definition}

\begin{definition} The Weyl fractional integrals of order $\alpha\in\mathbb{C}$, $\rm Re(\alpha)>0$ of a real function $f$ locally integrated into $(-\infty, \infty)$ being $-\infty\leq x\leq \infty$ are defined by
\begin{equation}\label{A11}
_{x}W_{\infty }^{\alpha }=_{x}I_{\infty }^{\alpha }f\left( x\right) :=\frac{1%
}{\Gamma \left( \alpha \right) }\int_{-\infty }^{x}\frac{f\left( t\right) }{%
\left( x-t\right) ^{1-\alpha }}dt
\end{equation}
and
\begin{equation}\label{A12}
_{-\infty }W_{x}^{\alpha }=_{-\infty }I_{x}^{\alpha }f\left( x\right) :=%
\frac{1}{\Gamma \left( \alpha \right) }\int_{x}^{\infty }\frac{f\left(
t\right) }{\left( t-x\right) ^{1-\alpha }}dt,
\end{equation}
respectively {\rm\cite{RCEC}}.
\end{definition}

\begin{definition} Let a continuous function by parts in $\mathbb{R}=(-\infty,\infty)$. The Liouville fractional integrals $(\mbox{left-sided and right-sided})$ of order $\alpha\in\mathbb{C}$, $\rm Re(\alpha)>0$ of a real function $f$, are defined by
\begin{equation}\label{A13}
I_{+}^{\alpha }f\left( x\right) :=\frac{1}{\Gamma \left( \alpha
\right) }\int_{-\infty }^{x}\frac{f\left( t\right) }{\left( x-t\right)
^{1-\alpha }}dt
\end{equation}
and
\begin{equation}\label{A14}
I_{-}^{\alpha }f\left( x\right) :=\frac{1}{\Gamma \left( \alpha \right) }%
\int_{x}^{\infty }\frac{f\left( t\right) }{\left( t-x\right) ^{1-\alpha }}dt,
\end{equation}
respectively {\rm\cite{AHMJ,IP}}.
\end{definition}

Zoubir \cite{DZO} established the reverse Minkowski's inequality and another result that refers to the inequality via Riemann-Liouville fractional integral according to the following two theorems.

\begin{theorem} Let $\alpha>0$, $p\geq 1$ and $f,g$ two positive functions in $[0, \infty)$, such that $\forall x>0$, $J^{\alpha }f^{p}\left( x\right) <\infty $ and $J^{\alpha }g^{p}\left( x\right) <\infty $. If $0<m\leq \displaystyle\frac{f\left( t\right) }{g\left( t\right) }\leq M$, for $m,M\in\mathbb{R}^{*}_{+}$ and $\forall t\in[0,x]$, then
\begin{equation}\label{A15}
\left( J^{\alpha }f^{p}\left( x\right) \right) ^{\frac{1}{p}}+\left(
J^{\alpha }g^{p}\left( x\right) \right) ^{\frac{1}{p}}\leq c_{1}\left(
J^{\alpha }\left( f+g\right) ^{p}\left( x\right) \right) ^{\frac{1}{p}},
\end{equation}
where $c_{1}=\displaystyle\frac{M\left( m+1\right) +\left( M+1\right) }{\left( m+1\right)\left( M+1\right) }$ {\rm\cite{DZO}}.
\end{theorem}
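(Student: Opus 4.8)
The plan is to reduce the claimed fractional inequality to an elementary pointwise inequality between $f$, $g$ and $f+g$, and then to push it through the operator $J^{\alpha}$ using two soft facts: $J^{\alpha}$ is a \emph{positive, monotone} linear operator (it is integration against the nonnegative kernel $(x-t)^{\alpha-1}/\Gamma(\alpha)$ over $t\in[0,x]$, cf. \eqref{A3}), and $t\mapsto t^{1/p}$ is nondecreasing on $[0,\infty)$. The finiteness hypotheses $J^{\alpha}f^{p}(x)<\infty$ and $J^{\alpha}g^{p}(x)<\infty$, together with the two-sided bound on $f/g$ (which forces $J^{\alpha}(f+g)^{p}(x)<\infty$ as well), ensure that every quantity below is finite and that taking $p$-th roots is legitimate.

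First I would exploit the upper bound: from $f(t)/g(t)\le M$ on $[0,x]$ we get $g(t)\ge M^{-1}f(t)$, hence $f(t)+g(t)\ge\frac{M+1}{M}f(t)$, i.e.
\[
f(t)\le\frac{M}{M+1}\bigl(f(t)+g(t)\bigr),\qquad t\in[0,x].
\]
Raising both nonnegative sides to the power $p$, applying $J^{\alpha}$, and then taking $p$-th roots yields
\[
\bigl(J^{\alpha}f^{p}(x)\bigr)^{1/p}\le\frac{M}{M+1}\bigl(J^{\alpha}(f+g)^{p}(x)\bigr)^{1/p}.
\]
Symmetrically, the lower bound $f(t)/g(t)\ge m$ gives $f(t)\ge m\,g(t)$, so $f(t)+g(t)\ge(m+1)g(t)$, that is $g(t)\le\frac{1}{m+1}(f(t)+g(t))$; the same three steps give
\[
\bigl(J^{\alpha}g^{p}(x)\bigr)^{1/p}\le\frac{1}{m+1}\bigl(J^{\alpha}(f+g)^{p}(x)\bigr)^{1/p}.
\]

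Finally I would add the two estimates and simplify the constant:
\[
\bigl(J^{\alpha}f^{p}(x)\bigr)^{1/p}+\bigl(J^{\alpha}g^{p}(x)\bigr)^{1/p}\le\Bigl(\tfrac{M}{M+1}+\tfrac{1}{m+1}\Bigr)\bigl(J^{\alpha}(f+g)^{p}(x)\bigr)^{1/p},
\]
and since $\frac{M}{M+1}+\frac{1}{m+1}=\frac{M(m+1)+(M+1)}{(m+1)(M+1)}=c_{1}$, this is exactly \eqref{A15}. There is no genuine obstacle here; the one place to be careful is the \emph{order} of operations: after applying $J^{\alpha}$ the constant that comes out is $\bigl(\tfrac{M}{M+1}\bigr)^{p}$, not $\tfrac{M}{M+1}$, so the $p$-th root must be taken \emph{before} the two bounds are summed — performing the steps in the wrong order would produce a different, incorrect constant. (One may also note that $p\ge1$ is not actually used in this argument, only $p>0$ for the monotonicity of the $p$-th root; it is kept because it is the natural hypothesis for Minkowski-type statements.)
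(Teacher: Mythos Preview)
Your proof is correct and follows essentially the same route as the paper: the paper (in its proof of the generalized Theorem~8, of which this statement is the special case $\rho\to1$, $\eta=\kappa=0$, $a=0$) derives the same two pointwise bounds $(M+1)^{p}f^{p}\le M^{p}(f+g)^{p}$ and $(m+1)^{p}g^{p}\le (f+g)^{p}$, integrates against the positive kernel, takes $p$-th roots, and adds. Your remark about the order of operations (root before sum) and the inessential role of $p\ge1$ are accurate side observations not made explicit in the paper.
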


\begin{theorem} Let $\alpha>0$, $p\geq 1$ and $f,g$ two positive functions in $[0, \infty)$, such that $\forall x>0$, $J^{\alpha }f^{p}\left( x\right) <\infty $ and $J^{\alpha }g^{p}\left( x\right) <\infty $. If $0<m\leq \displaystyle\frac{f\left( t\right) }{g\left( t\right) }\leq M$, for $m,M\in\mathbb{R}^{*}_{+}$ e $\forall t\in[0,x]$, then
\begin{equation}\label{A16}
\left( J^{\alpha }f^{p}\left( x\right) \right) ^{\frac{2}{p}}+\left(
J^{\alpha }g^{p}\left( x\right) \right) ^{\frac{2}{p}}\geq c_{2}\left(
J^{\alpha }f^{p}\left( x\right) \right) ^{\frac{1}{p}}\left( J^{\alpha
}g^{p}\left( x\right) \right) ^{\frac{1}{p}},
\end{equation}
where $c_{2}=\displaystyle\frac{\left( M+1\right) \left( m+1\right) }{M}-2 $ {\rm\cite{DZO}}.
\end{theorem}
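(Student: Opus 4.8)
The plan is to fix $x>0$ and abbreviate
$A:=\bigl(J^{\alpha}f^{p}(x)\bigr)^{1/p}$, $B:=\bigl(J^{\alpha}g^{p}(x)\bigr)^{1/p}$ and $C:=\bigl(J^{\alpha}(f+g)^{p}(x)\bigr)^{1/p}$, and to begin by recording that these three numbers are finite and strictly positive: finiteness of $A$ and $B$ is part of the hypothesis, $C<\infty$ follows from $(f+g)^{p}\le 2^{p-1}(f^{p}+g^{p})$, and strict positivity holds because $f,g>0$ while the Riemann--Liouville kernel $(x-t)^{\alpha-1}/\Gamma(\alpha)$ is strictly positive on $(0,x)$. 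Once this is in place the asserted inequality \eqref{A16} is literally $A^{2}+B^{2}\ge c_{2}AB$, equivalently $\tfrac{A}{B}+\tfrac{B}{A}\ge\tfrac{(M+1)(m+1)}{M}-2$.

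The heart of the argument will be to trap $C^{2}$ between $AB$ from below and $(A+B)^{2}$ from above. For the lower bound I would rewrite the two-sided hypothesis $m\le f(t)/g(t)\le M$ ($t\in[0,x]$) as the two pointwise estimates $f+g\ge (m+1)\,g$ (coming from $f\ge mg$) and $f+g\ge \tfrac{M+1}{M}\,f$ (coming from $g\ge f/M$), raise both to the power $p\ge 1$, and apply the monotone positive linear operator $J^{\alpha}$; taking $p$-th roots this yields $C\ge (m+1)B$ and $C\ge \tfrac{M+1}{M}A$, and multiplying the two gives
\[
C^{2}\ \ge\ \frac{(M+1)(m+1)}{M}\,AB .
\]
For the upper bound I would apply the classical Minkowski inequality to the finite measure $d\mu_{x}(t)=\frac{(x-t)^{\alpha-1}}{\Gamma(\alpha)}\,\mathbf{1}_{(0,x)}(t)\,dt$ (whose total mass $x^{\alpha}/\Gamma(\alpha+1)$ is finite for $\alpha>0$), obtaining $C=\lVert f+g\rVert_{L^{p}(\mu_{x})}\le \lVert f\rVert_{L^{p}(\mu_{x})}+\lVert g\rVert_{L^{p}(\mu_{x})}=A+B$, hence $C^{2}\le A^{2}+2AB+B^{2}$. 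Chaining the two displays gives $A^{2}+2AB+B^{2}\ge \frac{(M+1)(m+1)}{M}AB$, and cancelling $2AB$ leaves exactly $A^{2}+B^{2}\ge c_{2}AB$, as desired.

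I do not expect a genuine obstacle: the first reverse Minkowski inequality stated above is not even needed, and the only places that call for a word of justification are measure-theoretic — that $J^{\alpha}$ preserves inequalities between nonnegative measurable functions (so the pointwise bounds survive integration against the kernel), that $C<\infty$ so each quantity is well defined, and that Minkowski's inequality is being applied to a legitimate (finite, non-atomic) measure on $(0,x)$. All of these are routine once one notes $\int_{0}^{x}(x-t)^{\alpha-1}\,dt=x^{\alpha}/\alpha<\infty$. As a closing remark I would point out that nothing in this scheme uses the specific form of the Riemann--Liouville kernel beyond positivity, linearity and the $L^{p}$ triangle inequality, so the identical argument yields the Hadamard, Erdélyi--Kober, Weyl, Liouville and (the paper's target) Katugampola versions of the estimate.
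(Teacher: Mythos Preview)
Your proposal is correct and is essentially the same argument the paper uses (see the proof of Theorem~9, which specializes to this statement). Your two pointwise bounds $f+g\ge(m+1)g$ and $f+g\ge\tfrac{M+1}{M}f$ are exactly the paper's Eq.~(3.3) and Eq.~(3.5) rewritten, and the product-then-Minkowski step is the same as Eq.~(3.7)--(3.8); the only addition on your side is the explicit check that $A,B,C$ are finite and positive, which the paper leaves implicit.
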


In 2014, Chinchane et al. \cite{CHUVP} and Sabrina et al. \cite{SAB} also established the reverse Minkowski's inequality via Hadamard fractional integral as in two theorems below.

\begin{theorem} Let $\alpha>0$, $p\geq 1$ and $f,g$ two positive functions in $[0, \infty)$, such that $\forall x>0$, $H_{1}^{\alpha }f^{p}\left( x\right) <\infty $ and $H_{1}^{\alpha }g^{p}\left( x\right) <\infty $. If $0<m\leq \displaystyle\frac{f\left( t\right) }{g\left( t\right) }\leq M$, for $m,M\in\mathbb{R}^{*}_{+}$ e $\forall t\in[0,x]$, then
\begin{equation}\label{A17}
\left( H_{1}^{\alpha }f^{p}\left( x\right) \right) ^{\frac{1}{p}}+\left(
H_{1}^{\alpha }g^{p}\left( x\right) \right) ^{\frac{1}{p}}\leq c_{1}\left(
H_{1}^{\alpha }\left( f+g\right) ^{p}\left( x\right) \right) ^{\frac{1}{p}},
\end{equation}
where $c_{1}=\displaystyle\frac{M\left( m+1\right) +\left( M+1\right) }{\left( m+1\right) \left( M+1\right) }$ {\rm\cite{CHUVP,SAB}}.
\end{theorem}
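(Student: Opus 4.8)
The plan is to adapt the argument behind the classical reverse Minkowski inequality \eqref{A1} and its Riemann--Liouville analogue \eqref{A15} to the Hadamard operator $H_{1}^{\alpha}$. Only two features of $H_{1}^{\alpha}$ are needed: since $\alpha>0$, its kernel $\frac{1}{\Gamma(\alpha)}\left(\log\frac{x}{t}\right)^{\alpha-1}\frac{1}{t}$ is nonnegative for $1<t<x$, so the operator is \emph{monotone} (if $0\le\varphi\le\psi$ on $[1,x]$ then $H_{1}^{\alpha}\varphi(x)\le H_{1}^{\alpha}\psi(x)$) and positively homogeneous; and $u\mapsto u^{1/p}$ is nondecreasing on $[0,\infty)$. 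The two-sided bound $m\le f/g\le M$ will then be used to dominate $f$ and $g$ pointwise by multiples of $f+g$.

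First I would record the elementary consequences of $0<m\le\frac{f(t)}{g(t)}\le M$. From $f(t)\le M g(t)$ one gets $(M+1)f(t)\le M(f(t)+g(t))$, hence $f^{p}(t)\le\left(\frac{M}{M+1}\right)^{p}(f+g)^{p}(t)$ for every $t\in[1,x]$. Applying $H_{1}^{\alpha}$, which preserves the inequality because its kernel is nonnegative, and then extracting the $p$-th root yields $\left(H_{1}^{\alpha}f^{p}(x)\right)^{1/p}\le\frac{M}{M+1}\left(H_{1}^{\alpha}(f+g)^{p}(x)\right)^{1/p}$.

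Symmetrically, from $m g(t)\le f(t)$ one gets $(m+1)g(t)\le f(t)+g(t)$, so $g^{p}(t)\le\frac{1}{(m+1)^{p}}(f+g)^{p}(t)$, and the same two operations give $\left(H_{1}^{\alpha}g^{p}(x)\right)^{1/p}\le\frac{1}{m+1}\left(H_{1}^{\alpha}(f+g)^{p}(x)\right)^{1/p}$. Adding the two displayed inequalities and pulling out the common factor $\left(H_{1}^{\alpha}(f+g)^{p}(x)\right)^{1/p}$ leaves the constant $\frac{M}{M+1}+\frac{1}{m+1}=\frac{M(m+1)+(M+1)}{(M+1)(m+1)}=c_{1}$, which is precisely \eqref{A17}.

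The only genuine bookkeeping — and the closest thing to an obstacle — is checking that every fractional integral involved is finite and well defined, so that the steps above are legitimate. That $H_{1}^{\alpha}f^{p}(x)$ and $H_{1}^{\alpha}g^{p}(x)$ are finite is assumed; finiteness of $H_{1}^{\alpha}(f+g)^{p}(x)$ then follows from the convexity estimate $(f+g)^{p}\le 2^{p-1}(f^{p}+g^{p})$ together with monotonicity and linearity of $H_{1}^{\alpha}$, giving $H_{1}^{\alpha}(f+g)^{p}(x)\le 2^{p-1}\left(H_{1}^{\alpha}f^{p}(x)+H_{1}^{\alpha}g^{p}(x)\right)<\infty$. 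Note that no estimate on the singular factor $\left(\log\frac{x}{t}\right)^{\alpha-1}$ is ever required: only its sign is used, which is exactly why the Hadamard case — and, with the obvious change of kernel, the Katugampola case treated in the next section — is no harder than the Riemann--Liouville one.
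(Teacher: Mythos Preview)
Your argument is correct and is exactly the approach the paper uses. The paper does not prove this statement directly---it is quoted in the preliminaries with citations---but recovers it afterwards as the special case $\beta=\alpha$, $\kappa=0$, $\eta=0$, $a=1$, $\rho\to 0^{+}$ of Theorem~8, whose proof is line-for-line the same pointwise manipulation you carried out: from $f\le Mg$ deduce $(M+1)^{p}f^{p}\le M^{p}(f+g)^{p}$, multiply by the (nonnegative) kernel and integrate, take $p$-th roots, do the symmetric bound for $g$, and add. Your explicit finiteness check for $H_{1}^{\alpha}(f+g)^{p}(x)$ via $(f+g)^{p}\le 2^{p-1}(f^{p}+g^{p})$ is a detail the paper omits.
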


\begin{theorem} Let $\alpha>0$, $p\geq 1$ and $f,g$ two positive functions in $[0, \infty)$, such that $\forall x>0$, $H_{1}^{\alpha }f^{p}\left( x\right) <\infty $ and $H_{1}^{\alpha }g^{p}\left( x\right) <\infty $. If $0<m\leq \displaystyle\frac{f\left( t\right) }{g\left( t\right) }\leq M$, for $m,M\in\mathbb{R}^{*}_{+}$ e $\forall t\in[0,x]$, then
\begin{equation}\label{A18}
\left( H_{1}^{\alpha }f^{p}\left( x\right) \right) ^{\frac{2}{p}}+\left(
H_{1}^{\alpha }g^{p}\left( x\right) \right) ^{\frac{2}{p}}\geq c_{2}\left(
H_{1}^{\alpha }f^{p}\left( x\right) \right) ^{\frac{1}{p}}\left(
H_{1}^{\alpha }g^{p}\left( x\right) \right) ^{\frac{1}{p}}
\end{equation}
where $c_{2}= \displaystyle\frac{\left( M+1\right) \left( m+1\right) }{M}-2 $ {\rm\cite{CHUVP,SAB}}.
\end{theorem}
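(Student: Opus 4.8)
The plan is to carry over to the Hadamard fractional integral $H_{1}^{\alpha}$ the argument that proves the classical $c_{2}$ reverse Minkowski inequality and its Riemann--Liouville counterpart recalled above. The only structural facts I will use are that $H_{1}^{\alpha}$ is integration against the nonnegative kernel $\frac{1}{\Gamma(\alpha)}\left(\log\frac{x}{t}\right)^{\alpha-1}\frac{1}{t}$ over $[1,x]$ — so it is monotone and homogeneous under multiplication by positive constants — and that the ordinary Minkowski inequality holds for $H_{1}^{\alpha}$.

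First I would read off from the hypothesis $0<m\leq\frac{f(t)}{g(t)}\leq M$ the two pointwise estimates $f(t)+g(t)\geq\frac{M+1}{M}f(t)$ (since $g\geq f/M$) and $f(t)+g(t)\geq(m+1)g(t)$ (since $f\geq mg$), both valid on the interval of integration. Raising each to the power $p\geq1$, multiplying by the Hadamard kernel, integrating over $[1,x]$, and extracting $p$-th roots yields
\[
\left(H_{1}^{\alpha}(f+g)^{p}(x)\right)^{\frac1p}\geq\frac{M+1}{M}\left(H_{1}^{\alpha}f^{p}(x)\right)^{\frac1p},\qquad \left(H_{1}^{\alpha}(f+g)^{p}(x)\right)^{\frac1p}\geq(m+1)\left(H_{1}^{\alpha}g^{p}(x)\right)^{\frac1p};
\]
the quantity $H_{1}^{\alpha}(f+g)^{p}(x)$ is finite because $(f+g)^{p}\leq(M+1)^{p}g^{p}$ and $H_{1}^{\alpha}g^{p}(x)<\infty$ by assumption, so these manipulations are legitimate. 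Multiplying the two displayed inequalities (both sides being nonnegative) gives
\[
\left(H_{1}^{\alpha}(f+g)^{p}(x)\right)^{\frac2p}\geq\frac{(M+1)(m+1)}{M}\left(H_{1}^{\alpha}f^{p}(x)\right)^{\frac1p}\left(H_{1}^{\alpha}g^{p}(x)\right)^{\frac1p}.
\]

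Next I would invoke the ordinary Minkowski inequality for $H_{1}^{\alpha}$, namely $\left(H_{1}^{\alpha}(f+g)^{p}(x)\right)^{1/p}\leq\left(H_{1}^{\alpha}f^{p}(x)\right)^{1/p}+\left(H_{1}^{\alpha}g^{p}(x)\right)^{1/p}$, and square it to obtain
\[
\left(H_{1}^{\alpha}(f+g)^{p}(x)\right)^{\frac2p}\leq\left(H_{1}^{\alpha}f^{p}(x)\right)^{\frac2p}+2\left(H_{1}^{\alpha}f^{p}(x)\right)^{\frac1p}\left(H_{1}^{\alpha}g^{p}(x)\right)^{\frac1p}+\left(H_{1}^{\alpha}g^{p}(x)\right)^{\frac2p}.
\]
Combining this upper bound with the lower bound from the previous step and cancelling $2\left(H_{1}^{\alpha}f^{p}(x)\right)^{1/p}\left(H_{1}^{\alpha}g^{p}(x)\right)^{1/p}$ from both sides leaves
\[
\left(H_{1}^{\alpha}f^{p}(x)\right)^{\frac2p}+\left(H_{1}^{\alpha}g^{p}(x)\right)^{\frac2p}\geq\left(\frac{(M+1)(m+1)}{M}-2\right)\left(H_{1}^{\alpha}f^{p}(x)\right)^{\frac1p}\left(H_{1}^{\alpha}g^{p}(x)\right)^{\frac1p},
\]
which is exactly the claim with $c_{2}=\frac{(M+1)(m+1)}{M}-2$.

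Everything in this plan is elementary bookkeeping with finite nonnegative quantities — raising pointwise inequalities between positive functions to a power $p\geq1$, pushing $H_{1}^{\alpha}$ through these inequalities by monotonicity, and squaring — so I do not expect any delicate point there. The one ingredient that has to be secured in advance is the forward (ordinary) Minkowski inequality for the Hadamard fractional integral; that is where the only real work lies, and it is obtained either by citing it directly or by applying the classical Minkowski inequality in $L^{p}$ of the finite positive measure $d\mu_{x}(t)=\frac{1}{\Gamma(\alpha)}\left(\log\frac{x}{t}\right)^{\alpha-1}\frac{dt}{t}$ on $[1,x]$.
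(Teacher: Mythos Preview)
Your proposal is correct and follows essentially the same route as the paper. Note that the paper does not prove this Hadamard statement directly---it is quoted from \cite{CHUVP,SAB} in the preliminaries---but the paper's proof of the generalized version (Theorem~9, for the Katugampola-type integral $^{\rho}\mathcal{I}_{a+,\eta,\kappa}^{\alpha,\beta}$) is step-for-step the argument you outline: derive the two one-sided bounds $\left(I f^{p}\right)^{1/p}\leq\frac{M}{M+1}\left(I(f+g)^{p}\right)^{1/p}$ and $\left(I g^{p}\right)^{1/p}\leq\frac{1}{m+1}\left(I(f+g)^{p}\right)^{1/p}$ from the pointwise hypotheses, multiply them, apply the forward Minkowski inequality to $\left(I(f+g)^{p}\right)^{2/p}$, expand the square, and subtract the cross term. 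Your remark that the only substantive ingredient is the forward Minkowski inequality for the weighted measure $d\mu_{x}(t)=\frac{1}{\Gamma(\alpha)}\left(\log\frac{x}{t}\right)^{\alpha-1}\frac{dt}{t}$ is exactly right, and the paper uses it in the same place without further comment.
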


In 2014 Chinchane et al. \cite{CHV4} and recently Chinchane \cite{CHV}, established the reverse Minkowski's inequality via fractional integral of Saigo and the $k$-fractional integral, respectively.

In 2017, Katugampola \cite{UNT} introduced a fractional integral that unifies the six fractional integrals above mentioned. Finally, we introduce this integral and with a theorem we study their respective particular cases.

\begin{definition} Let $\varphi\in X^{p}_{c}(a,b)$, $\alpha>0$ and $\beta,\rho,\eta,\kappa\in\mathbb{R}$. Then, the fractional integrals of a function $f$, left and right, are given by
\begin{equation}\label{A19}
^{\rho }\mathcal{I}_{a+,\eta ,\kappa }^{\alpha ,\beta }\varphi\left( x\right) :=\frac{\rho
^{1-\beta }x^{\kappa }}{\Gamma \left( \alpha \right) }\int_{a}^{x}\frac{%
\tau^{\rho \left( \eta +1\right) -1}}{\left( x^{\rho }-\tau^{\rho }\right)
^{1-\alpha }}\varphi\left( \tau\right) d\tau,\text{ }0\leq a<x<b\leq \infty
\end{equation}
and
\begin{equation}\label{A20}
^{\rho }\mathcal{I}_{b-,\eta, \kappa }^{\alpha ,\beta }\varphi\left( x\right) :=\frac{\rho
^{1-\beta }x^{\rho \eta }}{\Gamma \left( \alpha \right) }\int_{x}^{b}\frac{%
\tau^{\kappa +\rho -1}}{\left( \tau^{\rho }-x^{\rho }\right) ^{1-\alpha }}\varphi\left(
\tau\right) d\tau,\text{ \ }0\leq a<x<b\leq \infty 
\end{equation}
respectively, if integrals exist {\rm\cite{UNT}}. 
\end{definition}

From now on, let's work only with the integral on the left, Eq.(\ref{A19}), because with the right integral we have a similar treatment.

\begin{theorem} Let $\alpha>0$ and $\beta,\rho,\eta,\kappa\in\mathbb{R}$. Then for $f\in X^{p}_{c}(a,b)$, with $a<x<b$, we have {\rm\cite{UNT}}:
\begin{enumerate}

\item \rm \text{For $\kappa=0$, $\eta=0$ and the limit $\rho\rightarrow 1$}, at \textnormal{Eq.(\ref{A19})}, we get the Riemann-Liouville fractional integral, i.e; Eq.(\ref{A3}).

\item With $\beta=\alpha$, $\kappa=0$, $\eta=0$, we take the limit $\rho\rightarrow 0^{+}$ and using the $\ell$'Hospital role, at \textnormal{Eq.(\ref{A19})}, we get the Hadamard fractional integral, i.e; Eq.(\ref{A5}).

\item In the case $\beta=0$ and $\kappa=-\rho(\alpha+\eta)$, at \textnormal{Eq.(\ref{A19})}, we get the Erdélyi-Kober fractional integral, i.e; Eq.(\ref{A7}).

\item For $\beta=\alpha$, $\kappa=0$ and $\eta=0$, at \textnormal{Eq.(\ref{A19})}, we get Katugampola fractional integral, i.e; Eq.(\ref{A9}).

\item With $\kappa=0$, $\eta=0$, $a=-\infty$ and take the limit $\rho\rightarrow 1$, at \textnormal{Eq.(\ref{A19})}, we get Weyl fractional integral, i.e; Eq.(\ref{A11}).

\item With $\kappa=0$, $\eta=0$, $a=0$ and take the limit $\rho\rightarrow 1$, at \textnormal{Eq.(\ref{A19})}, we get Liouville fractional integral, i.e; Eq.(\ref{A13}).
\end{enumerate}
\end{theorem}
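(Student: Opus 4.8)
The plan is to verify all six items by substituting the prescribed parameter values directly into the defining formula \eqref{A19}; five of the six reduce to pure bookkeeping, and item (2) --- the Hadamard case --- is the only one requiring an actual limiting argument, so I would dispose of the easy cases first and then concentrate on it.

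For item (1), taking $\kappa=0$ and $\eta=0$ collapses the kernel in \eqref{A19} to $\tau^{\rho-1}\left(x^{\rho}-\tau^{\rho}\right)^{\alpha-1}$ and the outer factor to $\rho^{1-\beta}/\Gamma(\alpha)$; letting $\rho\to 1$ gives $\rho^{1-\beta}\to1$, $\tau^{\rho-1}\to1$ and $x^{\rho}-\tau^{\rho}\to x-\tau$, so the expression converges to $\frac{1}{\Gamma(\alpha)}\int_{a}^{x}(x-\tau)^{\alpha-1}\varphi(\tau)\,d\tau$, which is \eqref{A3}. Items (5) and (6) are the identical computation performed with the lower limit $a$ replaced by $-\infty$ and by $0$ (the resulting improper integrals being assumed to converge, as is implicit in those definitions), producing the Weyl integral \eqref{A11} and the Liouville integral \eqref{A13}. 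For item (4), the choice $\beta=\alpha$, $\kappa=0$, $\eta=0$ leaves \eqref{A19} in the form $\frac{\rho^{1-\alpha}}{\Gamma(\alpha)}\int_{a}^{x}\tau^{\rho-1}\left(x^{\rho}-\tau^{\rho}\right)^{\alpha-1}\varphi(\tau)\,d\tau$, which is precisely the Katugampola integral \eqref{A9}. For item (3), with $\beta=0$ and $\kappa=-\rho(\alpha+\eta)$ the outer factor becomes $\rho\,x^{-\rho(\alpha+\eta)}/\Gamma(\alpha)$ and the kernel becomes $\tau^{\rho(\eta+1)-1}\left(x^{\rho}-\tau^{\rho}\right)^{\alpha-1}$, so after renaming the parameter $\sigma$ of \eqref{A7} as $\rho$ the two formulas coincide term by term.

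The real work is in item (2), where $\beta=\alpha$, $\kappa=0$, $\eta=0$, and $\rho\to0^{+}$. Starting from $\frac{\rho^{1-\alpha}}{\Gamma(\alpha)}\int_{a}^{x}\tau^{\rho-1}\left(x^{\rho}-\tau^{\rho}\right)^{\alpha-1}\varphi(\tau)\,d\tau$, the key step is to absorb the vanishing factor $\rho^{1-\alpha}$ into the singular kernel by writing
\[
\frac{\rho^{1-\alpha}}{\left(x^{\rho}-\tau^{\rho}\right)^{1-\alpha}}=\left(\frac{x^{\rho}-\tau^{\rho}}{\rho}\right)^{\alpha-1}.
\]
Then $\lim_{\rho\to0^{+}}\frac{x^{\rho}-\tau^{\rho}}{\rho}$ is an indeterminate form $0/0$, and l'Hospital's rule (differentiating numerator and denominator in $\rho$) gives $\lim_{\rho\to0^{+}}\left(x^{\rho}\log x-\tau^{\rho}\log\tau\right)=\log x-\log\tau=\log\frac{x}{\tau}$; likewise $\tau^{\rho-1}\to\tau^{-1}$. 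Hence the integrand converges pointwise on $(a,x)$ to $\left(\log\frac{x}{\tau}\right)^{\alpha-1}\varphi(\tau)/\tau$, and passing to the limit under the integral sign yields $\frac{1}{\Gamma(\alpha)}\int_{a}^{x}\left(\log\frac{x}{\tau}\right)^{\alpha-1}\frac{\varphi(\tau)}{\tau}\,d\tau$, that is, the Hadamard integral \eqref{A5}.

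The main obstacle I anticipate is justifying that final interchange of limit and integration. Since $\frac{x^{\rho}-\tau^{\rho}}{\rho}=\int_{\tau}^{x}s^{\rho-1}\,ds$ is comparable to $x-\tau$ as $\tau\uparrow x$, uniformly for $\rho$ in a right neighbourhood of $0$, the factor $\left(\frac{x^{\rho}-\tau^{\rho}}{\rho}\right)^{\alpha-1}$ is bounded there by a constant times $(x-\tau)^{\alpha-1}$, which is integrable because $\alpha>0$; together with $\tau^{\rho-1}$ remaining bounded on the compact interval $[a,x]$ and $\varphi\in X_{c}^{p}(a,b)$, this gives an integrable majorant and dominated convergence applies. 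Everything else is the parameter bookkeeping above, so this is the single point where more than routine verification is needed.
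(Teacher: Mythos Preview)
Your verification is correct, but note that the paper does not actually prove this theorem: it is stated with a citation to \cite{UNT} and no argument is given in the text, since the result is quoted from Katugampola's original work rather than established here. Your direct substitution for items (1), (3)--(6) and the l'Hospital computation $\lim_{\rho\to0^{+}}(x^{\rho}-\tau^{\rho})/\rho=\log(x/\tau)$ for item (2) are exactly the intended calculations, and your dominated-convergence justification for passing the limit under the integral in the Hadamard case, via the representation $(x^{\rho}-\tau^{\rho})/\rho=\int_{\tau}^{x}s^{\rho-1}\,ds$, is more than the paper itself supplies.
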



\section{Reverse Minkowski fractional integral inequality}

In this section, our main contribution, we establish and prove the reverse Minkowski's inequality via generalized fractional integral Eq.(\ref{A19}) and a theorem that refers to the reverse Minkowski's inequality.

\begin{theorem} Let $\alpha>0$, $\rho,\eta,\kappa,\beta\in\mathbb{R}$ and $p\geq 1$. Let $f,g\in X^{p}_{c}(a,x)$ two positive functions in $[0,\infty)$, such that $\forall x>a$, $^{\rho }\mathcal{I}_{a+,\eta ,\kappa }^{\alpha ,\beta }f^{p}\left( x\right) <\infty $ and $^{\rho }\mathcal{I}_{a+,\eta ,\kappa }^{\alpha ,\beta }g^{p}\left( x\right) <\infty $. If $0<m\leq \displaystyle\frac{f\left( t\right) }{g\left( t\right) }\leq M$, for $m,M\in\mathbb{R}^{*}_{+}$ and $\forall t\in \left[ a,x\right] $, then 
\begin{equation}\label{A21}
\left( ^{\rho }\mathcal{I}_{a+,\eta ,\kappa }^{\alpha ,\beta }f^{p}\left( x\right)
\right) ^{\frac{1}{p}}+\left( ^{\rho }\mathcal{I}_{a+,\eta ,\kappa }^{\alpha ,\beta
}g^{p}\left( x\right) \right) ^{\frac{1}{p}}\leq c_{1}\left( ^{\rho
}\mathcal{I}_{a+,\eta ,\kappa }^{\alpha ,\beta }\left( f+g\right) ^{p}\left( x\right)
\right) ^{\frac{1}{p}},
\end{equation}
with $c_{1}=\displaystyle\frac{M\left( m+1\right) +\left( M+1\right) }{\left( m+1\right) \left( M+1\right) }$.
\end{theorem}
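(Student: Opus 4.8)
The plan is to exploit the fact that the Katugampola operator in Eq.~(\ref{A19}) is a \emph{positive, order-preserving} linear operator: its kernel $\frac{\rho^{1-\beta}x^{\kappa}}{\Gamma(\alpha)}\,\frac{\tau^{\rho(\eta+1)-1}}{(x^{\rho}-\tau^{\rho})^{1-\alpha}}$ is nonnegative for $a\le\tau<x$ (since $\rho>0$ forces $\tau^{\rho}<x^{\rho}$, all remaining factors being nonnegative), so $0\le\varphi\le\psi$ on $[a,x]$ implies ${}^{\rho}\mathcal{I}_{a+,\eta,\kappa}^{\alpha,\beta}\varphi(x)\le{}^{\rho}\mathcal{I}_{a+,\eta,\kappa}^{\alpha,\beta}\psi(x)$. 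Writing $I:={}^{\rho}\mathcal{I}_{a+,\eta,\kappa}^{\alpha,\beta}$ for brevity, this monotonicity is the one structural property of the fractional integral I will use; everything else is pointwise algebra coming from the ratio bound.

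First I would convert the two-sided bound $m\le f/g\le M$ into two pointwise upper bounds, each controlling one of $f,g$ by the sum $f+g$. From $f/g\le M$ we get $g\ge f/M$, hence $f+g\ge f\,\frac{M+1}{M}$, i.e. $f\le\frac{M}{M+1}(f+g)$ on $[a,x]$. Symmetrically, from $m\le f/g$ we get $f\ge mg$, hence $f+g\ge(m+1)g$, i.e. $g\le\frac{1}{m+1}(f+g)$.

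Next I would raise each of these inequalities to the $p$-th power (legitimate since all quantities are nonnegative and $t\mapsto t^{p}$ is nondecreasing for $p\ge1$), apply $I$ using the monotonicity established above, and then take $p$-th roots (again monotone on $[0,\infty)$), pulling the constant out of the root via $(c^{p}A)^{1/p}=c\,A^{1/p}$ for $c\ge0$. This yields
\begin{equation*}
\left(I f^{p}(x)\right)^{1/p}\le\frac{M}{M+1}\left(I(f+g)^{p}(x)\right)^{1/p},\qquad
\left(I g^{p}(x)\right)^{1/p}\le\frac{1}{m+1}\left(I(f+g)^{p}(x)\right)^{1/p}.
\end{equation*}
Adding the two estimates and combining the coefficients over the common denominator $(m+1)(M+1)$ gives $\frac{M}{M+1}+\frac{1}{m+1}=\frac{M(m+1)+(M+1)}{(m+1)(M+1)}=c_{1}$, which is exactly Eq.~(\ref{A21}).

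The steps are individually routine; the one point that genuinely needs care is the first one, namely verifying that $I$ preserves pointwise inequalities. This rests on the kernel being nonnegative on the domain of integration, which in turn requires $\rho>0$ and $x>a\ge0$ so that $x^{\rho}-\tau^{\rho}>0$; I would state these sign conditions explicitly (they are implicit in the hypotheses that $f,g$ are positive on $[0,\infty)$ and in the convention $\rho>0$ under which Eq.~(\ref{A19}) is defined) before invoking monotonicity, since without them the whole argument collapses.
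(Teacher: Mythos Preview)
Your argument is correct and follows essentially the same route as the paper: derive the pointwise bounds $f\le\frac{M}{M+1}(f+g)$ and $g\le\frac{1}{m+1}(f+g)$, raise to the $p$-th power, push through the fractional integral (the paper writes this out as ``multiply by the kernel and integrate'' rather than invoking monotonicity abstractly), take $p$-th roots, and add. Your explicit attention to the nonnegativity of the kernel is a welcome addition that the paper leaves implicit; note, however, that the theorem as stated allows $\rho\in\mathbb{R}$, so your remark that $\rho>0$ is needed is really identifying a tacit hypothesis shared by both proofs rather than something already guaranteed by the statement.
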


\begin{proof} Using the condition $\displaystyle\frac{f\left( t\right) }{g\left( t\right) }\leq M$, $t\in[a,x]$, we can write
\begin{equation*}
f\left( t\right) \leq M\left( f\left( t\right) +g\left( t\right) \right)
-Mf\left( t\right) ,
\end{equation*}
which implies,
\begin{equation}\label{A22}
\left( M+1\right) ^{p}f^{p}\left( t\right) \leq M^{p}\left( f\left( t\right)
+g\left( t\right) \right) ^{p}.
\end{equation}

Multiplying by $\displaystyle\frac{\rho ^{1-\beta }x^{\kappa }t^{\rho \left( \eta +1\right) -1}}{\Gamma\left( \alpha \right) \left( x^{\rho }-t^{\rho }\right) ^{1-\alpha }}$ 
both sides of {\rm Eq.(\ref{A22})} and integrating with respect to the variable $t$, we have
\begin{equation}\label{A23}
\frac{\left( M+1\right) ^{p}\rho ^{1-\beta }x^{\kappa }}{\Gamma \left(
\alpha \right) }\int_{a}^{x}\frac{t^{\rho \left( \eta +1\right) -1}}{\left(
x^{\rho }-t^{\rho }\right) ^{1-\alpha }}f^{p}\left( t\right) dt\leq \frac{%
M^{p}\rho ^{1-\beta }x^{\kappa }}{\Gamma \left( \alpha \right) }\int_{a}^{x}%
\frac{t^{\rho \left( \eta +1\right) -1}}{\left( x^{\rho }-t^{\rho }\right)
^{1-\alpha }}\left( f+g\right) ^{p}\left( t\right) dt.
\end{equation}

Consequently, we can write
\begin{equation}\label{A23}
\left( ^{\rho }\mathcal{I}_{a+,\eta ,\kappa }^{\alpha ,\beta }f^{p}\left( x\right)
\right) ^{\frac{1}{p}}\leq \frac{M}{M+1}\left( ^{\rho }\mathcal{I}_{a+,\eta ,\kappa
}^{\alpha ,\beta }\left( f+g\right) ^{p}\left( x\right) \right) ^{\frac{1}{p}}.
\end{equation}

On the other hand, as $mg\left( t\right) \leq f\left( t\right) $, follow
\begin{equation}\label{A24}
\left( 1+\frac{1}{m}\right) ^{p}g^{p}\left( t\right) \leq \left( \frac{1}{m%
}\right) ^{p}\left( f\left( t\right) +g\left( t\right) \right) ^{p}.
\end{equation}

Further, multiplying by $\displaystyle\frac{\rho ^{1-\beta }x^{\kappa }t^{\rho \left( \eta +1\right) -1}}{\Gamma\left( \alpha \right) \left( x^{\rho }-t^{\rho }\right) ^{1-\alpha }}$ both sides of {\rm Eq.(\ref{A24})} and integrating with respect to the variable $t$, we have
\begin{equation}\label{A25}
\left( ^{\rho }\mathcal{I}_{a+,\eta ,\kappa }^{\alpha ,\beta }g^{p}\left( t\right)
\right) ^{\frac{1}{p}}\leq \frac{1}{m+1}\left( ^{\rho }\mathcal{I}_{a+,\eta ,\kappa
}^{\alpha ,\beta }\left( f+g\right) ^{p}\left( t\right) \right) ^{\frac{1}{p}}.
\end{equation}

From {\rm Eq.(\ref{A23})} and {\rm Eq.(\ref{A25})}, the result follows.
\end{proof}

Eq.(\ref{A21}) is the so-called reverse Minkowski's inequality associated with the Katugampola fractional integral.

\begin{theorem} Let $\alpha>0$, $\rho,\eta,\kappa,\beta\in\mathbb{R}$ and $p\geq 1$. Let $f,g\in X^{p}_{c}(a,x)$ be two positive functions in $[0,\infty)$, such that $\forall x>a$, $^{\rho }\mathcal{I}_{a+,\eta ,\kappa }^{\alpha ,\beta }f^{p}\left( x\right) <\infty $ and $^{\rho }\mathcal{I}_{a+,\eta ,\kappa }^{\alpha ,\beta }g^{p}\left( x\right) <\infty $. If $0<m\leq \displaystyle\frac{f\left( t\right) }{g\left( t\right) }\leq M$, for $m,M\in\mathbb{R}^{*}_{+}$ and $\forall t\in \left[ a,x\right] $, then
\begin{equation}\label{A26}
\left( ^{\rho }\mathcal{I}_{a+,\eta ,\kappa }^{\alpha ,\beta }f^{p}\left( x\right)
\right) ^{\frac{2}{p}}+\left( ^{\rho }\mathcal{I}_{a+,\eta ,\kappa }^{\alpha ,\beta
}g^{p}\left( x\right) \right) ^{\frac{2}{p}}\geq c_{2}\left( ^{\rho
}\mathcal{I}_{a+,\eta ,\kappa }^{\alpha ,\beta }f^{p}\left( x\right) \right) ^{\frac{1%
}{p}}\left( ^{\rho }\mathcal{I}_{a+,\eta ,\kappa }^{\alpha ,\beta }g^{p}\left(
x\right) \right) ^{\frac{1}{p}}
\end{equation}
with $c_{2}=\displaystyle\frac{\left( M+1\right) \left( m+1\right) }{M}-2$.
\end{theorem}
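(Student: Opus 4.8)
The plan is to mimic the structure of Theorem~2 in the Riemann setting, combining the one–sided fractional Minkowski bound already obtained in the previous theorem with the pointwise ratio constraint. First I would invoke the estimates proved just above, namely
\[
\left( {}^{\rho }\mathcal{I}_{a+,\eta ,\kappa }^{\alpha ,\beta }f^{p}(x)\right)^{1/p}\le \frac{M}{M+1}\left( {}^{\rho }\mathcal{I}_{a+,\eta ,\kappa }^{\alpha ,\beta }(f+g)^{p}(x)\right)^{1/p}
\]
and
\[
\left( {}^{\rho }\mathcal{I}_{a+,\eta ,\kappa }^{\alpha ,\beta }g^{p}(x)\right)^{1/p}\le \frac{1}{m+1}\left( {}^{\rho }\mathcal{I}_{a+,\eta ,\kappa }^{\alpha ,\beta }(f+g)^{p}(x)\right)^{1/p}.
\]
Multiplying these two inequalities gives a lower bound for ${}^{\rho }\mathcal{I}^{\alpha,\beta}(f+g)^{p}(x)$ in terms of the product of the two fractional integrals of $f^p$ and $g^p$; specifically $\left({}^{\rho }\mathcal{I}^{\alpha,\beta}(f+g)^{p}(x)\right)^{2/p}\ge \frac{(M+1)(m+1)}{M}\left({}^{\rho }\mathcal{I}^{\alpha,\beta}f^{p}(x)\right)^{1/p}\left({}^{\rho }\mathcal{I}^{\alpha,\beta}g^{p}(x)\right)^{1/p}$.

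Next I would expand the left side using the (ordinary, non-reverse) Minkowski inequality for the $L^p$-type seminorm induced by the positive kernel $\frac{\rho^{1-\beta}x^{\kappa}t^{\rho(\eta+1)-1}}{\Gamma(\alpha)(x^{\rho}-t^{\rho})^{1-\alpha}}\,dt$: since this kernel is nonnegative on $(a,x)$, the functional $h\mapsto\left({}^{\rho }\mathcal{I}^{\alpha,\beta}h^{p}(x)\right)^{1/p}$ is a seminorm, so
\[
\left( {}^{\rho }\mathcal{I}^{\alpha,\beta}(f+g)^{p}(x)\right)^{1/p}\le \left( {}^{\rho }\mathcal{I}^{\alpha,\beta}f^{p}(x)\right)^{1/p}+\left( {}^{\rho }\mathcal{I}^{\alpha,\beta}g^{p}(x)\right)^{1/p}.
\]
Squaring this and writing $A=\left({}^{\rho }\mathcal{I}^{\alpha,\beta}f^{p}(x)\right)^{1/p}$, $B=\left({}^{\rho }\mathcal{I}^{\alpha,\beta}g^{p}(x)\right)^{1/p}$, I get $\left({}^{\rho }\mathcal{I}^{\alpha,\beta}(f+g)^{p}(x)\right)^{2/p}\le A^{2}+2AB+B^{2}$. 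Combining with the lower bound from the previous paragraph yields $A^{2}+2AB+B^{2}\ge \frac{(M+1)(m+1)}{M}AB$, hence $A^{2}+B^{2}\ge\left(\frac{(M+1)(m+1)}{M}-2\right)AB=c_{2}AB$, which is exactly \eqref{A26}.

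I do not expect any genuine obstacle here; the argument is a short algebraic manipulation once the two one–sided bounds are in hand. The only point requiring a little care is the justification that $h\mapsto\left({}^{\rho }\mathcal{I}^{\alpha,\beta}h^{p}(x)\right)^{1/p}$ really obeys the triangle inequality — this is just the classical Minkowski inequality applied to the measure with density $\frac{\rho^{1-\beta}x^{\kappa}t^{\rho(\eta+1)-1}}{\Gamma(\alpha)(x^{\rho}-t^{\rho})^{1-\alpha}}$ on $(a,x)$, which is a positive measure precisely because $\alpha>0$, $\rho>0$ (implicit from the Katugampola setup), and $x>a\ge 0$ make the density nonnegative; the finiteness assumptions ${}^{\rho }\mathcal{I}^{\alpha,\beta}f^{p}(x)<\infty$ and ${}^{\rho }\mathcal{I}^{\alpha,\beta}g^{p}(x)<\infty$ ensure all quantities are well defined. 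I would state this seminorm property as a one-line remark (or cite the ordinary Minkowski inequality) and then carry out the four-line computation above.
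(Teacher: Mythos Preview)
Your proposal is correct and follows essentially the same route as the paper: multiply the two one-sided bounds from the previous theorem to get $\left({}^{\rho}\mathcal{I}_{a+,\eta,\kappa}^{\alpha,\beta}(f+g)^{p}(x)\right)^{2/p}\ge \frac{(M+1)(m+1)}{M}AB$, then apply the ordinary Minkowski inequality to bound the left side by $(A+B)^{2}$ and rearrange. The only addition on your part is the explicit justification of why Minkowski's inequality applies to this weighted integral, which the paper simply invokes without comment.
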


\begin{proof}
 Carrying out the product between {\rm Eq.(\ref{A23})} and {\rm Eq.(\ref{A25})}, we have
\begin{equation}\label{A27}
\frac{\left( M+1\right) \left( m+1\right) }{M}\left( ^{\rho }\mathcal{I}_{a+,\eta
,\kappa }^{\alpha ,\beta }f^{p}\left( x\right) \right) ^{\frac{1}{p}}\left(
^{\rho }\mathcal{I}_{a+,\eta ,\kappa }^{\alpha ,\beta }g^{p}\left( x\right) \right) ^{%
\frac{1}{p}}\leq \left( ^{\rho }\mathcal{I}_{a+,\eta ,\kappa }^{\alpha ,\beta }\left(
f+g\right) ^{p}\left( x\right) \right) ^{\frac{2}{p}}.
\end{equation}

Using the Minkowski's inequality, on the right side of {\rm Eq.(\ref{A27})}, we have
\begin{eqnarray}
&&\frac{\left( M+1\right) \left( m+1\right) }{M}\left( ^{\rho }\mathcal{I}_{a+,\eta
,\kappa }^{\alpha ,\beta }f^{p}\left( x\right) \right) ^{\frac{1}{p}}\left(
^{\rho }\mathcal{I}_{a+,\eta ,\kappa }^{\alpha ,\beta }g^{p}\left( x\right) \right) ^{%
\frac{1}{p}}  \label{A28} \\
&\leq &\left( \left( ^{\rho }\mathcal{I}_{a+,\eta ,\kappa }^{\alpha ,\beta
}f^{p}\left( x\right) \right) ^{\frac{1}{p}}+\left( ^{\rho }\mathcal{I}_{a+,\eta
,\kappa }^{\alpha ,\beta }g^{p}\left( x\right) \right) ^{\frac{1}{p}}\right)
^{2}.  \notag
\end{eqnarray}
So, from {\rm Eq.(\ref{A28})}, we conclude that
\begin{eqnarray*}
&&\left( \frac{\left( M+1\right) \left( m+1\right) }{M}-2\right) \left(
^{\rho }\mathcal{I}_{a+,\eta ,\kappa }^{\alpha ,\beta }f^{p}\left( x\right) \right) ^{%
\frac{1}{p}}\left( ^{\rho }\mathcal{I}_{a+,\eta ,\kappa }^{\alpha ,\beta }g^{p}\left(
x\right) \right) ^{\frac{1}{p}} \\
&\leq &\left( ^{\rho }\mathcal{I}_{a+,\eta ,\kappa }^{\alpha ,\beta }f^{p}\left(
x\right) \right) ^{\frac{2}{p}}+\left( ^{\rho }\mathcal{I}_{a+,\eta ,\kappa }^{\alpha
,\beta }g^{p}\left( x\right) \right) ^{\frac{2}{p}}.
\end{eqnarray*}
\end{proof}

Note that, if $\beta =\alpha$, $\kappa = 0$, $\eta=0$ and the limit $\rho\rightarrow 1$, in Eq.(\ref{A19}), we recover Riemann-Liouville fractional integral, Eq.(\ref{A3}). In this sense, choosing $a+=0 $, and substituting in Theorem 8 and Theorem 9, we obtain, as particular cases, the respective Theorem 3 and Theorem 4, which correspond to the inequality via Riemann-Liouville fractional integral. On the other hand, if $\beta=\alpha$, $\kappa = 0 $, $\eta = 0$, and the limit $\rho \rightarrow 0+$ and using the $\ell$'Hospital rule, in Eq.(\ref{A19}), we obtain the Hadamard fractional integral, Eq.(\ref{A5}). Similarly, choosing $a=1$ and substituting in Theorem 8 and Theorem 9, we obtain, as particular cases, the Theorem 5 and Theorem 6, respectively.

\section{Other fractional integral inequalities}

In this section we generalize the results discussed by Chinchane \cite{CHV}, Sulaiman \cite{SULA} and Sroysang \cite{PALM} on the reverse Minkowski's inequality via Riemann integral, using the fractional integral proposed by Katugampola \cite{UNT}.

\begin{theorem} Let $\alpha>0$, $\rho,\eta,\kappa,\beta\in\mathbb{R}$, $p\geq 1$ and $\frac{1}{p}+\frac{1}{q}=1$. Let $f,g\in X^{p}_{c}(a,x)$ be two positive functions in $[0,\infty)$, such that $\forall x>a$, $^{\rho }\mathcal{I}_{a+,\eta ,\kappa }^{\alpha ,\beta }f\left( x\right) <\infty $ and $^{\rho }\mathcal{I}_{a+,\eta ,\kappa }^{\alpha ,\beta }g\left( x\right) <\infty $. If $0<m\leq \displaystyle\frac{f\left( t\right) }{g\left( t\right) }\leq M$, for $m,M\in\mathbb{R}^{*}_{+}$ and $\forall t\in \left[ a,x\right] $, then 
\begin{equation}\label{A29}
\left( ^{\rho }\mathcal{I}_{a+,\eta ,\kappa }^{\alpha ,\beta }f\left( x\right) \right)
^{\frac{1}{p}}\left( ^{\rho }\mathcal{I}_{a+,\eta ,\kappa }^{\alpha ,\beta }g\left(
x\right) \right) ^{\frac{1}{q}}\leq \left( \frac{M}{m}\right) ^{\frac{1}{pq}
}\left( ^{\rho }\mathcal{I}_{a+,\eta ,\kappa }^{\alpha ,\beta }f^{\frac{1}{p}}\left(
x\right) g^{\frac{1}{q}}\left( x\right) \right) .
\end{equation}
\end{theorem}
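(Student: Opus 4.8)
The plan is to adapt the argument used for Theorems 8 and 9: replace the algebraic decomposition that produced $(f+g)^{p}$ there with two \emph{pointwise} comparisons tailored to create the mixed product $f^{1/p}g^{1/q}$, and then apply the operator ${}^{\rho}\mathcal{I}_{a+,\eta,\kappa}^{\alpha,\beta}$, which is monotone because its kernel is nonnegative.

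First I would observe that the kernel
$$K(x,\tau)=\frac{\rho^{1-\beta}x^{\kappa}\,\tau^{\rho(\eta+1)-1}}{\Gamma(\alpha)\,(x^{\rho}-\tau^{\rho})^{1-\alpha}}$$
appearing in Eq.(\ref{A19}) is nonnegative for $a<\tau<x$, so that $\varphi\le\psi$ on $[a,x]$ implies ${}^{\rho}\mathcal{I}_{a+,\eta,\kappa}^{\alpha,\beta}\varphi(x)\le{}^{\rho}\mathcal{I}_{a+,\eta,\kappa}^{\alpha,\beta}\psi(x)$; this monotonicity, together with the fact that $s\mapsto s^{1/p}$ and $s\mapsto s^{1/q}$ are increasing on $[0,\infty)$, is all the structure I need from the operator.

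Next come the two pointwise estimates. From $f(t)/g(t)\le M$ we get $g(t)\ge M^{-1}f(t)$, hence $g^{1/q}(t)\ge M^{-1/q}f^{1/q}(t)$; multiplying by $f^{1/p}(t)$ and using $\tfrac1p+\tfrac1q=1$ gives $f(t)\le M^{1/q}f^{1/p}(t)g^{1/q}(t)$. Symmetrically, from $f(t)/g(t)\ge m$ we get $f^{1/p}(t)\ge m^{1/p}g^{1/p}(t)$, and multiplying by $g^{1/q}(t)$ gives $g(t)\le m^{-1/p}f^{1/p}(t)g^{1/q}(t)$. Applying ${}^{\rho}\mathcal{I}_{a+,\eta,\kappa}^{\alpha,\beta}$ to each of these and then raising to the powers $\tfrac1p$ and $\tfrac1q$ respectively yields
$$\left({}^{\rho}\mathcal{I}_{a+,\eta,\kappa}^{\alpha,\beta}f(x)\right)^{\frac1p}\le M^{\frac{1}{pq}}\left({}^{\rho}\mathcal{I}_{a+,\eta,\kappa}^{\alpha,\beta}f^{\frac1p}g^{\frac1q}(x)\right)^{\frac1p},\qquad \left({}^{\rho}\mathcal{I}_{a+,\eta,\kappa}^{\alpha,\beta}g(x)\right)^{\frac1q}\le m^{-\frac{1}{pq}}\left({}^{\rho}\mathcal{I}_{a+,\eta,\kappa}^{\alpha,\beta}f^{\frac1p}g^{\frac1q}(x)\right)^{\frac1q}.$$

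Finally I would multiply these two inequalities side by side; since $\tfrac1p+\tfrac1q=1$ the two powers of ${}^{\rho}\mathcal{I}_{a+,\eta,\kappa}^{\alpha,\beta}f^{1/p}g^{1/q}(x)$ combine to the first power, and the constant becomes $M^{1/(pq)}m^{-1/(pq)}=(M/m)^{1/(pq)}$, which is precisely Eq.(\ref{A29}). I do not expect a genuine obstacle here: the whole content lies in choosing the two correct pointwise bounds and in the bookkeeping of exponents (in particular that $M^{1/q}$ raised to the power $1/p$ equals $M^{1/(pq)}$); the one point worth stating explicitly rather than leaving implicit is the nonnegativity of the kernel that underlies the monotonicity step.
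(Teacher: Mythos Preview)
Your proposal is correct and follows essentially the same route as the paper: derive the two pointwise bounds $f\le M^{1/q}f^{1/p}g^{1/q}$ and $g\le m^{-1/p}f^{1/p}g^{1/q}$, apply the (monotone) operator, raise to the powers $1/p$ and $1/q$, and multiply. The only difference is cosmetic---you make the nonnegativity of the kernel (and hence the monotonicity of ${}^{\rho}\mathcal{I}_{a+,\eta,\kappa}^{\alpha,\beta}$) explicit, whereas the paper uses it silently.
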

\begin{proof}

Using the condition $\displaystyle\frac{f\left( t\right) }{g\left( t\right) }\leq M$, $t\in [a,x]$ with $x>a$, we have
\begin{equation}\label{A30}
f\left( t\right) \leq Mg\left( t\right) \Rightarrow g^{\frac{1}{q}}\left(
t\right) \geq M^{-\frac{1}{q}}f^{\frac{1}{q}}\left( t\right) .
\end{equation}

Multiplying by $f^{\frac{1}{p}}\left( t\right) $ both sides of {\rm Eq.(\ref{A30})}, we can rewrite it as follows
\begin{equation}\label{A31}
f^{\frac{1}{p}}\left( t\right) g^{\frac{1}{q}}\left( t\right) \geq M^{-\frac{%
1}{q}}f\left( t\right) .
\end{equation}

Now, multiplying by $\displaystyle\frac{\rho ^{1-\beta }x^{\kappa }t^{\rho \left( \eta +1\right) -1}}{\Gamma \left( \alpha \right) \left( x^{\rho }-t^{\rho }\right) ^{1-\alpha }}$ both sides of {\rm Eq.(\ref{A31})} and integrating with respect to the variable $t$, we have
\begin{equation}\label{A32}
\int_{a}^{x}\frac{\rho ^{1-\beta }x^{\kappa }t^{\rho \left( \eta +1\right)
-1}}{\Gamma \left( \alpha \right) \left( x^{\rho }-t^{\rho }\right)
^{1-\alpha }}M^{-\frac{1}{q}}f\left( t\right) dt\leq \int_{a}^{x}\frac{\rho
^{1-\beta }x^{\kappa }t^{\rho \left( \eta +1\right) -1}}{\Gamma \left(
\alpha \right) \left( x^{\rho }-t^{\rho }\right) ^{1-\alpha }}f^{\frac{1}{p}%
}\left( t\right) g^{\frac{1}{q}}\left( t\right) dt.
\end{equation}

So, the inequality follows
\begin{equation}\label{A33}
M^{-\frac{1}{pq}}\left( ^{\rho }\mathcal{I}_{a+,\eta ,\kappa }^{\alpha ,\beta }f\left(
x\right) \right) ^{\frac{1}{p}}\leq \left( ^{\rho }\mathcal{I}_{a+,\eta ,\kappa
}^{\alpha ,\beta }f^{\frac{1}{p}}\left( x\right) g^{\frac{1}{q}}\left(
x\right) \right) ^{\frac{1}{p}}.
\end{equation}

On the order hand, we have
\begin{equation}\label{A34}
m^{\frac{1}{p}}g^{\frac{1}{p}}\left( t\right) \leq f^{\frac{1}{p}}\left(t\right), \text{ } x>a.
\end{equation}

Multiplying by $g^{\frac{1}{q}}(t)$ both sides of {\rm Eq.(\ref{A34})} and using the relation $\frac{1}{p}+\frac{1}{q}=1$, we have
\begin{equation}\label{A35}
m^{\frac{1}{p}}g\left( t\right) \leq f^{\frac{1}{p}}\left( t\right) g^{\frac{%
1}{q}}\left( t\right) .
\end{equation}

Multiplying by $\displaystyle\frac{\rho ^{1-\beta }x^{\kappa }t^{\rho \left( \eta +1\right) -1}}{\Gamma\left( \alpha \right) \left( x^{\rho }-t^{\rho }\right) ^{1-\alpha }}$ both sides of {\rm Eq.(\ref{A35})} and integrating with respect to the variable $t$, we have
\begin{equation}\label{A36}
m^{\frac{1}{pq}}\left( ^{\rho }\mathcal{I}_{a+,\eta ,\kappa }^{\alpha ,\beta }g\left(
x\right) \right) ^{\frac{1}{q}}\leq \left( ^{\rho }\mathcal{I}_{a+,\eta ,\kappa
}^{\alpha ,\beta }f^{\frac{1}{p}}\left( x\right) g^{\frac{1}{q}}\left(
x\right) \right) ^{\frac{1}{q}}.
\end{equation}

Evaluating the product between {\rm Eq.(\ref{A33})} and {\rm Eq.(\ref{A36})} and using the relation $\frac{1}{p}+\frac{1}{q}=1$, we conclude that
\begin{equation*}
\left( ^{\rho }\mathcal{I}_{a+,\eta ,\kappa }^{\alpha ,\beta }f\left( x\right) \right)
^{\frac{1}{p}}\left( ^{\rho }\mathcal{I}_{a+,\eta ,\kappa }^{\alpha ,\beta }g\left(
x\right) \right) ^{\frac{1}{q}}\leq \left( \frac{M}{m}\right) ^{\frac{1}{pq}%
}\left( ^{\rho }\mathcal{I}_{a+,\eta ,\kappa }^{\alpha ,\beta }f^{\frac{1}{p}}\left(
x\right) g^{\frac{1}{q}}\left( x\right) \right) ^{\frac{1}{p}}.
\end{equation*}
\end{proof}

\begin{theorem} Let $\alpha>0$, $\rho,\eta,\kappa,\beta\in\mathbb{R}$, $p\geq 1$ and $\frac{1}{p}+\frac{1}{q}=1$. Let $f,g\in X^{p}_{c}(a,x)$ be two positive functions in $[0,\infty)$, such that $\forall x>a$, $^{\rho }\mathcal{I}_{a+,\eta ,\kappa }^{\alpha ,\beta }f^{p}\left( x\right) <\infty $, $^{\rho }\mathcal{I}_{a+,\eta ,\kappa }^{\alpha ,\beta }f^{q}\left( x\right) <\infty $, $^{\rho }\mathcal{I}_{a+,\eta ,\kappa }^{\alpha ,\beta }g^{p}\left( x\right) <\infty $ and $^{\rho }\mathcal{I}_{a+,\eta ,\kappa }^{\alpha ,\beta }g^{q}\left( x\right) <\infty $. If $0<m\leq \displaystyle\frac{f\left( t\right) }{g\left( t\right) }\leq M$, for $m,M\in\mathbb{R}^{*}_{+}$ and $\forall t\in \left[ a,x\right] $, then  
\begin{equation}\label{A29}
^{\rho }\mathcal{I}_{a+,\eta ,\kappa }^{\alpha ,\beta }f\left( x\right) g\left(
x\right) \leq c_{3}\left( ^{\rho }\mathcal{I}_{a+,\eta ,\kappa }^{\alpha ,\beta
}\left( f^{p}+g^{p}\right) \left( x\right) \right) +c_{4}\left( ^{\rho
}\mathcal{I}_{a+,\eta ,\kappa }^{\alpha ,\beta }\left( f^{q}+g^{q}\right) \left(
x\right) \right) ,
\end{equation}
with $c_{3}=\displaystyle\frac{2^{p-1}M^{p}}{p\left( M+1\right) ^{p}}$ and $c_{4}=\displaystyle\frac{2^{p-1}}{q\left( m+1\right) ^{q}}$.
\end{theorem}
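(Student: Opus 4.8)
The plan is to reduce the asserted inequality to a pointwise estimate between the integrands and then apply the operator ${}^{\rho}\mathcal{I}_{a+,\eta,\kappa}^{\alpha,\beta}$. Concretely, I will prove
\[
f(t)g(t)\;\le\;c_{3}\bigl(f^{p}(t)+g^{p}(t)\bigr)+c_{4}\bigl(f^{q}(t)+g^{q}(t)\bigr)\qquad\text{for every }t\in[a,x],
\]
then multiply both sides by the nonnegative kernel $\dfrac{\rho^{1-\beta}\,x^{\kappa}\,t^{\rho(\eta+1)-1}}{\Gamma(\alpha)\,(x^{\rho}-t^{\rho})^{1-\alpha}}$ and integrate in $t$ over $(a,x)$, exactly as in the proofs above. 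Since the kernel is nonnegative on $(a,x)$, the operator ${}^{\rho}\mathcal{I}_{a+,\eta,\kappa}^{\alpha,\beta}$ is linear and order preserving, and the four finiteness hypotheses guarantee that every integral that appears converges; hence the pointwise bound immediately gives the claim. So everything reduces to the pointwise estimate.

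For the pointwise estimate I would first use Young's inequality: since $f,g>0$ and $\tfrac1p+\tfrac1q=1$, one has $f(t)g(t)\le\tfrac1p f^{p}(t)+\tfrac1q g^{q}(t)$. Then I would feed in the hypothesis $0<m\le f/g\le M$ through the two elementary linear inequalities used in the proof of the reverse Minkowski inequality (Theorem 8): from $f(t)\le Mg(t)$ one gets $(M+1)f(t)\le M\bigl(f(t)+g(t)\bigr)$, hence $f^{p}(t)\le\dfrac{M^{p}}{(M+1)^{p}}\bigl(f(t)+g(t)\bigr)^{p}$; and from $mg(t)\le f(t)$ one gets $(m+1)g(t)\le f(t)+g(t)$, hence $g^{q}(t)\le\dfrac{1}{(m+1)^{q}}\bigl(f(t)+g(t)\bigr)^{q}$. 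Substituting these into the Young estimate gives
\[
f(t)g(t)\le\frac{M^{p}}{p(M+1)^{p}}\bigl(f(t)+g(t)\bigr)^{p}+\frac{1}{q(m+1)^{q}}\bigl(f(t)+g(t)\bigr)^{q}.
\]

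Finally I would apply the convexity inequality $(u+v)^{r}\le 2^{r-1}(u^{r}+v^{r})$, valid for $u,v\ge0$ and $r\ge1$, to $\bigl(f(t)+g(t)\bigr)^{p}$ and to $\bigl(f(t)+g(t)\bigr)^{q}$; this turns the previous display into $f(t)g(t)\le c_{3}\bigl(f^{p}(t)+g^{p}(t)\bigr)+c_{4}\bigl(f^{q}(t)+g^{q}(t)\bigr)$, and integrating against the kernel finishes the proof. I expect no real analytic obstacle — the argument is pointwise algebra followed by one application of a positive linear operator — the only thing requiring care is the bookkeeping of constants in this last step. A faithful use of the convexity inequality on $\bigl(f+g\bigr)^{q}$ produces the factor $2^{q-1}$, so the natural value is $c_{4}=2^{q-1}/\bigl(q(m+1)^{q}\bigr)$; this agrees with the stated $2^{p-1}/\bigl(q(m+1)^{q}\bigr)$ exactly when $q\le p$, that is $p\ge2$, while for general $p\ge1$ one should read $c_{4}$ with $2^{q-1}$ in the numerator.
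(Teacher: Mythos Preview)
Your proposal is correct and follows essentially the same route as the paper: Young's inequality pointwise, then the ratio bounds $f\le Mg$ and $mg\le f$ to control $f^{p}$ and $g^{q}$ by $(f+g)^{p}$ and $(f+g)^{q}$, and finally the convexity inequality $(u+v)^{r}\le 2^{r-1}(u^{r}+v^{r})$; the only cosmetic difference is that you assemble the pointwise estimate first and apply the operator once, whereas the paper integrates at each step. Your remark about $c_{4}$ is on target: the paper's own proof ends with the constant $2^{q-1}/\bigl(q(m+1)^{q}\bigr)$, so the $2^{p-1}$ in the statement is a misprint.
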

\begin{proof} Using the hypothesis, we have the following identity
\begin{equation}\label{B1}
\left( M+1\right) ^{p}f^{p}\left( t\right) \leq M^{p}\left( f+g\right)
^{p}\left( t\right) .
\end{equation}

Multiplying by $\displaystyle\frac{\rho ^{1-\beta }x^{\kappa }t^{\rho \left( \eta +1\right) -1}}{\Gamma\left( \alpha \right) \left( x^{\rho }-t^{\rho }\right) ^{1-\alpha }}$ both sides of {\rm Eq.(\ref{B1})} and integrating with respect to the variable $t$, we get
\begin{equation*}
\int_{a}^{x}\frac{\rho ^{1-\beta }x^{\kappa }t^{\rho \left( \eta +1\right)
-1}}{\Gamma \left( \alpha \right) \left( x^{\rho }-t^{\rho }\right)
^{1-\alpha }}\left( M+1\right) ^{p}f^{p}\left( t\right) dt\leq \int_{a}^{x}%
\frac{\rho ^{1-\beta }x^{\kappa }t^{\rho \left( \eta +1\right) -1}}{\Gamma
\left( \alpha \right) \left( x^{\rho }-t^{\rho }\right) ^{1-\alpha }}%
M^{p}\left( f+g\right) ^{p}\left( t\right) dt.
\end{equation*}

In this way, we have
\begin{equation}\label{B2}
^{\rho }\mathcal{I}_{a+,\eta ,\kappa }^{\alpha ,\beta }f^{p}\left( x\right) \leq \frac{%
M^{p}}{\left( M+1\right) ^{p}}\; ^{\rho }\mathcal{I}_{a+,\eta ,\kappa }^{\alpha ,\beta
}\left( f+g\right) ^{p}\left( x\right) .
\end{equation}

On the other hand, as $0<m<\frac{f\left( t\right) }{g\left( t\right) }$, $t\in \left( a,x\right) $, we have
\begin{equation}\label{B3}
\left( m+1\right) ^{q}g^{q}\left( t\right) \leq \left( f+g\right) ^{q}\left(t\right) .
\end{equation}

Again, multiplying by $\displaystyle\frac{\rho ^{1-\beta }x^{\kappa }t^{\rho \left( \eta +1\right) -1}}{\Gamma \left( \alpha \right) \left( x^{\rho }-t^{\rho }\right) ^{1-\alpha }}$ both sides of {\rm Eq.(\ref{B3})} and integrating with respect to the variable $t$, we get
\begin{equation}\label{B4}
^{\rho }\mathcal{I}_{a+,\eta ,\kappa }^{\alpha ,\beta }g^{q}\left( x\right) \leq \frac{%
1}{\left( m+1\right) ^{q}} \;^{\rho }\mathcal{I}_{a+,\eta ,\kappa }^{\alpha ,\beta
}\left( f+g\right) ^{q}\left( x\right).
\end{equation}

Considering Young's inequality, {\rm\cite{KREZ}}
\begin{equation}\label{B5}
f\left( t\right) g\left( t\right) \leq \frac{f^{p}\left( t\right) }{p}+\frac{%
g^{q}\left( t\right) }{q},
\end{equation}
multiplying by $\displaystyle\frac{\rho ^{1-\beta }x^{\kappa }t^{\rho \left( \eta +1\right) -1}}{\Gamma \left( \alpha \right) \left( x^{\rho }-t^{\rho }\right) ^{1-\alpha }}$ both sides of {\rm Eq.(\ref{B5})} and integrating with respect to the variable $t$, we have
\begin{equation}\label{B6}
^{\rho }\mathcal{I}_{a+,\eta ,\kappa }^{\alpha ,\beta }\left( fg\right) \left(
x\right) \leq \frac{1}{p}\left( ^{\rho }\mathcal{I}_{a+,\eta ,\kappa }^{\alpha ,\beta
}f^{p}\left( x\right) \right) +\frac{1}{q}\left( ^{\rho }\mathcal{I}_{a+,\eta ,\kappa
}^{\alpha ,\beta }g^{q}\left( x\right) \right) .
\end{equation}

Thus, using {\rm Eq.(\ref{B2})}, {\rm Eq.(\ref{B4})} and {\rm Eq.(\ref{B6})}, we get
\begin{eqnarray}
^{\rho }\mathcal{I}_{a+,\eta ,\kappa }^{\alpha ,\beta }\left( fg\right) \left(
x\right)  &\leq &\frac{1}{p}\left( ^{\rho }\mathcal{I}_{a+,\eta ,\kappa }^{\alpha
,\beta }f^{p}\left( x\right) \right) +\frac{1}{q}\left( ^{\rho }\mathcal{I}_{a+,\eta
,\kappa }^{\alpha ,\beta }g^{q}\left( x\right) \right)   \notag  \label{B7}
\\
&\leq &\left( ^{\rho }\mathcal{I}_{a+,\eta ,\kappa }^{\alpha ,\beta }f^{p}\left(
x\right) \right) +\left( ^{\rho }\mathcal{I}_{a+,\eta ,\kappa }^{\alpha ,\beta
}g^{q}\left( x\right) \right)   \notag \\
&\leq &\frac{M^{p}}{p\left( M+1\right) ^{p}}\left( ^{\rho }\mathcal{I}_{a+,\eta
,\kappa }^{\alpha ,\beta }\left( f+g\right) ^{p}\left( x\right) \right)  
\notag \\
&&+\frac{1}{q\left( m+1\right) ^{q}}\left( ^{\rho }\mathcal{I}_{a+,\eta ,\kappa
}^{\alpha ,\beta }\left( f+g\right) ^{q}\left( x\right) \right) .
\end{eqnarray}

Using the following inequality, $\left( a+b\right) ^{r}\leq 2^{p-1}\left( a^{r}+b^{r}\right)$,  $r>1$, $a,b\geq 0$, we get
\begin{equation}\label{B8}
^{\rho }\mathcal{I}_{a+,\eta ,\kappa }^{\alpha ,\beta }\left( f+g\right) ^{p}\left(
x\right) \leq 2^{p-1\rho }\mathcal{I}_{a+,\eta ,\kappa }^{\alpha ,\beta }\left(
f^{p}+g^{p}\right) \left( x\right) 
\end{equation}
and
\begin{equation}\label{B9}
^{\rho }\mathcal{I}{a+,\eta ,\kappa }^{\alpha ,\beta }\left( f+g\right) ^{q}\left(
x\right) \leq 2^{q-1\rho }\mathcal{I}_{a+,\eta ,\kappa }^{\alpha ,\beta }\left(
f^{q}+g^{q}\right) \left( x\right) .
\end{equation}

Thus, replacing {\rm Eq.(\ref{B8})} and {\rm Eq.(\ref{B9})} at {\rm Eq.(\ref{B7})}, we conclude that
\begin{equation*}
^{\rho }\mathcal{I}_{a+,\eta ,\kappa }^{\alpha ,\beta }\left( fg\right) \left(
x\right) \leq \frac{2^{p-1}M^{p}}{p\left( M+1\right) ^{p}}\left( ^{\rho
}\mathcal{I}_{a+,\eta ,\kappa }^{\alpha ,\beta }\left( f^{p}+g^{p}\right) \left(
x\right) \right) +\frac{2^{q-1}}{q\left( m+1\right) ^{q}}\left( ^{\rho
}\mathcal{I}_{a+,\eta ,\kappa }^{\alpha ,\beta }\left( f^{q}+g^{q}\right) \left(
x\right) \right) .
\end{equation*}
\end{proof}

\begin{theorem} Let $\alpha>0$, $\rho,\eta,\kappa,\beta\in\mathbb{R}$ and $p\geq 1$. Let $f,g\in X^{p}_{c}(a,x)$ be two positive functions in $[0,\infty)$, such that $\forall x>a$, $^{\rho }\mathcal{I}_{a+,\eta ,\kappa }^{\alpha ,\beta }f^{p}\left( x\right) <\infty $ and $^{\rho }\mathcal{I}_{a+,\eta ,\kappa }^{\alpha ,\beta }g^{p}\left( x\right) <\infty $. If $0<m\leq \displaystyle\frac{f\left( t\right) }{g\left( t\right) }\leq M$, for $m,M\in\mathbb{R}^{*}_{+}$ and $\forall t\in \left[ a,x\right] $, then
\begin{eqnarray}
\frac{M+1}{M-c}\left( ^{\rho }\mathcal{I}_{a+,\eta ,\kappa }^{\alpha ,\beta }\left(
f\left( x\right) -cg\left( x\right) \right) \right) ^{\frac{1}{p}} &\leq
&\left( ^{\rho }\mathcal{I}_{a+,\eta ,\kappa }^{\alpha ,\beta }f^{p}\left( x\right)
\right) ^{\frac{1}{p}}+\left( ^{\rho }\mathcal{I}_{a+,\eta ,\kappa }^{\alpha ,\beta
}g^{p}\left( x\right) \right) ^{\frac{1}{p}}  \notag  \label{A29} \\
&\leq &\frac{m+1}{m-c}\left( ^{\rho }\mathcal{I}_{a+,\eta ,\kappa }^{\alpha ,\beta
}\left( f\left( x\right) -cg\left( x\right) \right) \right) ^{\frac{1}{p}}
\end{eqnarray}
\end{theorem}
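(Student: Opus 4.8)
The plan is to convert the two-sided ratio bound $m\le f/g\le M$ (together with the hypothesis $0<c<m$, which is what makes $f-cg$ positive and without which the displayed inequality is not meaningful) into pointwise linear comparisons among $f$, $g$, $f+g$ and $f-cg$ on $[a,x]$, and then to push these comparisons through the fractional integral $^{\rho}\mathcal{I}_{a+,\eta,\kappa}^{\alpha,\beta}$ and the map $u\mapsto u^{1/p}$, both of which are monotone on nonnegative integrands; the operator is moreover positively homogeneous. Throughout, the integrand written $f(x)-cg(x)$ in the statement is to be understood as $\left(f(x)-cg(x)\right)^{p}$, consistently with the outer exponent $1/p$; note that $f(t)-cg(t)\ge (m-c)g(t)>0$ for $t\in[a,x]$, so taking its $p$-th power is legitimate.

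\textbf{Right-hand (upper) inequality.} From $mg(t)\le f(t)$ we get $(m-c)g(t)\le f(t)-cg(t)$, hence $g(t)\le\frac{1}{m-c}\left(f(t)-cg(t)\right)$; and from $g(t)\le f(t)/m$ we get $f(t)-cg(t)\ge\frac{m-c}{m}f(t)$, i.e. $f(t)\le\frac{m}{m-c}\left(f(t)-cg(t)\right)$. Raising each inequality to the power $p$, multiplying through by the kernel $\frac{\rho^{1-\beta}x^{\kappa}t^{\rho(\eta+1)-1}}{\Gamma(\alpha)\left(x^{\rho}-t^{\rho}\right)^{1-\alpha}}$, integrating in $t$ over $[a,x]$, and taking $p$-th roots yields
\[
\left(^{\rho}\mathcal{I}_{a+,\eta,\kappa}^{\alpha,\beta}f^{p}(x)\right)^{1/p}\le\frac{m}{m-c}\left(^{\rho}\mathcal{I}_{a+,\eta,\kappa}^{\alpha,\beta}(f-cg)^{p}(x)\right)^{1/p},\qquad \left(^{\rho}\mathcal{I}_{a+,\eta,\kappa}^{\alpha,\beta}g^{p}(x)\right)^{1/p}\le\frac{1}{m-c}\left(^{\rho}\mathcal{I}_{a+,\eta,\kappa}^{\alpha,\beta}(f-cg)^{p}(x)\right)^{1/p}.
\]
Adding these two estimates gives exactly the right-hand inequality with constant $\frac{m+1}{m-c}$.

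\textbf{Left-hand (lower) inequality.} Symmetrically, $f(t)\le Mg(t)$ gives $f(t)-cg(t)\le(M-c)g(t)$, so $g(t)\ge\frac{1}{M-c}\left(f(t)-cg(t)\right)$, while $g(t)\ge f(t)/M$ gives $f(t)-cg(t)\le\frac{M-c}{M}f(t)$, so $f(t)\ge\frac{M}{M-c}\left(f(t)-cg(t)\right)$. Summing, $f(t)+g(t)\ge\frac{M+1}{M-c}\left(f(t)-cg(t)\right)$ for all $t\in[a,x]$; raising to the $p$-th power, applying $^{\rho}\mathcal{I}_{a+,\eta,\kappa}^{\alpha,\beta}$ and taking $p$-th roots gives $\left(^{\rho}\mathcal{I}_{a+,\eta,\kappa}^{\alpha,\beta}(f+g)^{p}(x)\right)^{1/p}\ge\frac{M+1}{M-c}\left(^{\rho}\mathcal{I}_{a+,\eta,\kappa}^{\alpha,\beta}(f-cg)^{p}(x)\right)^{1/p}$. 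It then suffices to apply the ordinary Minkowski inequality for the fractional integral (i.e. Minkowski's inequality for the finite measure on $[a,x]$ with density $\frac{\rho^{1-\beta}x^{\kappa}t^{\rho(\eta+1)-1}}{\Gamma(\alpha)\left(x^{\rho}-t^{\rho}\right)^{1-\alpha}}$), namely $\left(^{\rho}\mathcal{I}_{a+,\eta,\kappa}^{\alpha,\beta}(f+g)^{p}(x)\right)^{1/p}\le\left(^{\rho}\mathcal{I}_{a+,\eta,\kappa}^{\alpha,\beta}f^{p}(x)\right)^{1/p}+\left(^{\rho}\mathcal{I}_{a+,\eta,\kappa}^{\alpha,\beta}g^{p}(x)\right)^{1/p}$, and to chain it with the previous display.

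The computation is routine once the pointwise comparisons are in place; the only genuine points requiring attention are the positivity of $f-cg$ (which forces the implicit restriction $0<c<m$ on the constant $c$, absent from the statement as printed) and the use of Minkowski's inequality in the forward direction in the lower estimate --- that is the single step connecting $\left(^{\rho}\mathcal{I}_{a+,\eta,\kappa}^{\alpha,\beta}(f+g)^{p}(x)\right)^{1/p}$ to $\left(^{\rho}\mathcal{I}_{a+,\eta,\kappa}^{\alpha,\beta}f^{p}(x)\right)^{1/p}+\left(^{\rho}\mathcal{I}_{a+,\eta,\kappa}^{\alpha,\beta}g^{p}(x)\right)^{1/p}$. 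No fractional-calculus input beyond monotonicity and positive homogeneity of the operator is needed.
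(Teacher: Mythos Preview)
Your argument is correct, and for the upper inequality it coincides with the paper's: both derive the pointwise bounds $f(t)\le\frac{m}{m-c}(f(t)-cg(t))$ and $g(t)\le\frac{1}{m-c}(f(t)-cg(t))$, raise to the $p$-th power, push through the integral, take $p$-th roots, and add. For the lower inequality you take a slightly different route: you first sum the two pointwise lower bounds to obtain $f(t)+g(t)\ge\frac{M+1}{M-c}(f(t)-cg(t))$, integrate to bound $\left(^{\rho}\mathcal{I}_{a+,\eta,\kappa}^{\alpha,\beta}(f+g)^{p}(x)\right)^{1/p}$ from below, and then invoke the ordinary Minkowski inequality to pass to $\left(^{\rho}\mathcal{I}_{a+,\eta,\kappa}^{\alpha,\beta}f^{p}(x)\right)^{1/p}+\left(^{\rho}\mathcal{I}_{a+,\eta,\kappa}^{\alpha,\beta}g^{p}(x)\right)^{1/p}$. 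The paper instead integrates each pointwise lower bound \emph{separately}, obtaining $\frac{M}{M-c}\left(^{\rho}\mathcal{I}_{a+,\eta,\kappa}^{\alpha,\beta}(f-cg)^{p}(x)\right)^{1/p}\le\left(^{\rho}\mathcal{I}_{a+,\eta,\kappa}^{\alpha,\beta}f^{p}(x)\right)^{1/p}$ and the analogous bound for $g$, and only then adds; this avoids any appeal to Minkowski's inequality and keeps the argument perfectly symmetric with the upper estimate. Your detour through $(f+g)^{p}$ is valid but unnecessary --- since you already have the separate pointwise lower bounds for $f$ and $g$, you could have integrated each before summing, exactly as you did for the upper inequality.
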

\begin{proof}
By hypothesis $0<c<m\leq M$, so
\begin{equation*}
mc\leq Mc\Rightarrow mc+m\leq mc+M\leq Mc+M\Rightarrow \left( M+1\right)
\left( m-c\right) \leq \left( m+1\right) \left( M-c\right) .
\end{equation*}

Thus, we conclude that
\begin{equation*}
\frac{M+1}{M-c}\leq \frac{m+1}{m-c}.
\end{equation*}

Also, we have
\begin{equation*}
m-c\leq \frac{f\left( t\right) -cg\left( t\right) }{g\left( t\right) }\leq M-c
\end{equation*}
which implies,
\begin{equation}\label{D1}
\frac{\left( f\left( t\right) -cg\left( t\right) \right) ^{p}}{\left(
M-c\right) ^{p}}\leq g^{p}\left( t\right) \leq \frac{\left( f\left( t\right)
-cg\left( t\right) \right) ^{p}}{\left( m-c\right) ^{p}}.
\end{equation}

Again, we have
\begin{equation*}
\frac{1}{M}\leq \frac{g\left( t\right) }{f\left( t\right) }\leq \frac{1}{m}%
\Rightarrow \frac{m-c}{cm}\leq \frac{f\left( t\right) -cg\left( t\right) }{%
cf\left( t\right) }\leq \frac{M-c}{cM},
\end{equation*}
which implies,
\begin{equation}\label{D2}
\left( \frac{M}{M-c}\right) ^{p}\left( f\left( t\right) -cg\left( t\right)
\right) ^{p}\leq f^{p}\left( t\right) \leq \left( \frac{m}{m-c}\right)
^{p}\left( f\left( t\right) -cg\left( t\right) \right) ^{p}.
\end{equation}

Multiplying by $\displaystyle\frac{\rho ^{1-\beta }x^{\kappa }t^{\rho \left( \eta +1\right) -1}}{\Gamma\left( \alpha \right) \left( x^{\rho }-t^{\rho }\right) ^{1-\alpha }}$ both sides of {\rm Eq.(\ref{D1})} and integrating with respect to the variable $t$, we have
\begin{eqnarray*}
\int_{a}^{x}\frac{\rho ^{1-\beta }x^{\kappa }t^{\rho \left( \eta +1\right)
-1}}{\Gamma \left( \alpha \right) \left( x^{\rho }-t^{\rho }\right)
^{1-\alpha }}\frac{\left( f\left( t\right) -cg\left( t\right) \right) ^{p}}{%
\left( M-c\right) ^{p}}dt &\leq &\int_{a}^{x}\frac{\rho ^{1-\beta }x^{\kappa
}t^{\rho \left( \eta +1\right) -1}}{\Gamma \left( \alpha \right) \left(
x^{\rho }-t^{\rho }\right) ^{1-\alpha }}g^{p}\left( t\right) dt \\
&\leq &\int_{a}^{x}\frac{\rho ^{1-\beta }x^{\kappa }t^{\rho \left( \eta
+1\right) -1}}{\Gamma \left( \alpha \right) \left( x^{\rho }-t^{\rho
}\right) ^{1-\alpha }}\frac{\left( f\left( t\right) -cg\left( t\right)
\right) ^{p}}{\left( m-c\right) ^{p}}dt.
\end{eqnarray*}

In this way, we obtain
\begin{eqnarray}\label{D3}
\frac{1}{M-c}\left( ^{\rho }\mathcal{I}_{a+,\eta ,\kappa }^{\alpha ,\beta }\left(
f\left( x\right) -cg\left( x\right) \right) ^{p}\right) ^{\frac{1}{p}} &\leq
&\left( ^{\rho }\mathcal{I}_{a+,\eta ,\kappa }^{\alpha ,\beta }g^{p}\left( x\right)
\right) ^{\frac{1}{p}} \\
&\leq &\frac{1}{m-c}\left( ^{\rho }\mathcal{I}_{a+,\eta ,\kappa }^{\alpha ,\beta
}\left( f\left( x\right) -cg\left( x\right) \right) ^{p}\right) ^{\frac{1}{p}}.\notag 
\end{eqnarray}

Realizing the same procedure as in {\rm Eq.(\ref{D2})}, we have
\begin{eqnarray}\label{D4}
\frac{M}{M-c}\left( ^{\rho }\mathcal{I}_{a+,\eta ,\kappa }^{\alpha ,\beta }\left(
f\left( x\right) -cg\left( x\right) \right) ^{p}\right) ^{\frac{1}{p}} &\leq
&\left( ^{\rho }\mathcal{I}_{a+,\eta ,\kappa }^{\alpha ,\beta }f^{p}\left( x\right)
\right) ^{\frac{1}{p}} \\
&\leq &\frac{m}{m-c}\left( ^{\rho }\mathcal{I}_{a+,\eta ,\kappa }^{\alpha ,\beta
}\left( f\left( x\right) -cg\left( x\right) \right) ^{p}\right) ^{\frac{1}{p}}. \notag 
\end{eqnarray}

Adding {\rm Eq.(\ref{D3})} and {\rm Eq.(\ref{D4})}, we conclude that
\begin{eqnarray*}
\frac{M+1}{M-c}\left( ^{\rho }\mathcal{I}_{a+,\eta ,\kappa }^{\alpha ,\beta }\left(
f\left( x\right) -cg\left( x\right) \right) ^{p}\right) ^{\frac{1}{p}} &\leq
&\left( ^{\rho }\mathcal{I}_{a+,\eta ,\kappa }^{\alpha ,\beta }f^{p}\left( x\right)
\right) ^{\frac{1}{p}}+\left( ^{\rho }\mathcal{I}_{a+,\eta ,\kappa }^{\alpha ,\beta
}g^{p}\left( x\right) \right) ^{\frac{1}{p}} \\
&\leq &\frac{m+1}{m-c}\left( ^{\rho }\mathcal{I}_{a+,\eta ,\kappa }^{\alpha ,\beta
}\left( f\left( x\right) -cg\left( x\right) \right) ^{p}\right) ^{\frac{1}{p}}.
\end{eqnarray*}

\end{proof}

\begin{theorem} Let $\alpha>0$, $\rho,\eta,\kappa,\beta\in\mathbb{R}$ and $p\geq 1$. Let $f,g\in X^{p}_{c}(a,x)$ be two positive functions in $[0,\infty)$, such that $\forall x>a$, $^{\rho }\mathcal{I}_{a+,\eta ,\kappa }^{\alpha ,\beta }f^{p}\left( x\right) <\infty $ and $^{\rho }\mathcal{I}_{a+,\eta ,\kappa }^{\alpha ,\beta }g^{p}\left( x\right) <\infty $. If $0 \leq a \leq f(t)\leq A$ and $0\leq b \leq g(t)\leq B$, $\forall t\in \left[ a,x\right] $, then
\begin{equation}
\left( ^{\rho }\mathcal{I}_{a+,\eta ,\kappa }^{\alpha ,\beta }f^{p}\left( x\right)
\right) ^{\frac{1}{p}}+\left( ^{\rho }\mathcal{I}_{a+,\eta ,\kappa }^{\alpha ,\beta
}g^{p}\left( x\right) \right) ^{\frac{1}{p}}\leq c_{5}\left( ^{\rho
}\mathcal{I}_{a+,\eta ,\kappa }^{\alpha ,\beta }\left( f+g\right) ^{p}\left( x\right)
\right) ^{\frac{1}{p}},
\end{equation}
with $c_{5}=\displaystyle\frac{A\left( a+B\right) +B\left( A+b\right) }{\left( A+b\right)\left( a+B\right) }$.
\end{theorem}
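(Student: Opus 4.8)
The plan is to mimic the proof of Theorem 8 (equivalently, to invoke Theorem 8 after a rescaling). The observation that makes everything work is that the two-sided bounds $0\le a\le f(t)\le A$ and $0\le b\le g(t)\le B$ on $[a,x]$ give the pointwise estimate
\[
\frac{a}{B}\le\frac{f(t)}{g(t)}\le\frac{A}{b},\qquad t\in[a,x],
\]
so Theorem 8 applies with $m=a/B$ and $M=A/b$; substituting these into $c_{1}=\dfrac{M(m+1)+(M+1)}{(m+1)(M+1)}$ and using $m+1=(a+B)/B$, $M+1=(A+b)/b$ collapses the constant to $\dfrac{A(a+B)+B(A+b)}{(A+b)(a+B)}=c_{5}$, which is exactly the claim. (One should flag here the mild notational clash: $a$ plays the double role of the left endpoint of the interval and of the lower bound of $f$.)

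If one prefers a self-contained argument, I would unroll this as follows. From $f(t)\le A$ and $g(t)\ge b$ one gets $bf(t)\le Ag(t)$, hence $(A+b)f(t)\le A\bigl(f(t)+g(t)\bigr)$ and therefore $(A+b)^{p} f^{p}(t)\le A^{p}(f+g)^{p}(t)$; symmetrically, from $f(t)\ge a$ and $g(t)\le B$ one gets $(a+B)^{p} g^{p}(t)\le B^{p}(f+g)^{p}(t)$. Multiplying each of these by the kernel $\dfrac{\rho^{1-\beta}x^{\kappa}t^{\rho(\eta+1)-1}}{\Gamma(\alpha)(x^{\rho}-t^{\rho})^{1-\alpha}}$, integrating in $t$ over $[a,x]$, and taking $p$-th roots yields
\[
\bigl({}^{\rho}\mathcal{I}_{a+,\eta,\kappa}^{\alpha,\beta}f^{p}(x)\bigr)^{1/p}\le\frac{A}{A+b}\bigl({}^{\rho}\mathcal{I}_{a+,\eta,\kappa}^{\alpha,\beta}(f+g)^{p}(x)\bigr)^{1/p},
\]
\[
\bigl({}^{\rho}\mathcal{I}_{a+,\eta,\kappa}^{\alpha,\beta}g^{p}(x)\bigr)^{1/p}\le\frac{B}{a+B}\bigl({}^{\rho}\mathcal{I}_{a+,\eta,\kappa}^{\alpha,\beta}(f+g)^{p}(x)\bigr)^{1/p}.
\]
Adding these two inequalities gives the result with $c_{5}=\dfrac{A}{A+b}+\dfrac{B}{a+B}=\dfrac{A(a+B)+B(A+b)}{(A+b)(a+B)}$.

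I do not anticipate a genuine obstacle: the statement is a direct corollary of (the proof of) Theorem 8, and the only care needed is the algebraic simplification of the constant together with the harmless overloading of the symbol $a$. The unrolled version above is also slightly more robust than the pure reduction, since it remains valid in the degenerate cases $a=0$ or $b=0$, where the quantities $m=a/B$ or $M=A/b$ used in the rescaling would be problematic.
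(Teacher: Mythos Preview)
Your proposal is correct and the unrolled version is essentially identical to the paper's proof: the paper derives the ratio bound $\dfrac{a}{B}\le\dfrac{f(t)}{g(t)}\le\dfrac{A}{b}$, obtains the same two pointwise inequalities $f^{p}(t)\le\bigl(\tfrac{A}{A+b}\bigr)^{p}(f+g)^{p}(t)$ and $g^{p}(t)\le\bigl(\tfrac{B}{a+B}\bigr)^{p}(f+g)^{p}(t)$, multiplies by the kernel, integrates, takes $p$-th roots, and adds. Your observation that the direct derivation (bypassing the ratio $f/g$) remains valid when $a=0$ or $b=0$ is a genuine improvement over the paper's route, which writes $\tfrac{1}{g(t)}\le\tfrac{1}{b}$ and thus tacitly needs $b>0$.
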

\begin{proof}

By hypothesis, it follows that
\begin{equation}\label{C1}
\frac{1}{B}\leq \frac{1}{g\left( t\right) }\leq \frac{1}{b}.
\end{equation}

Realizing the product between {\rm Eq.(\ref{C1})} and $0<a\leq f\left( t\right) \leq A$, we have
\begin{equation}\label{C2}
\frac{a}{B}\leq \frac{f\left( t\right) }{g\left( t\right) }\leq \frac{A}{b}.
\end{equation}

From {\rm Eq.(\ref{C2})}, we get
\begin{equation}\label{C3}
g^{p}\left( t\right) \leq \left( \frac{B}{a+B}\right) ^{p}\left( f\left(
t\right) +g\left( t\right) \right) ^{p}
\end{equation}
and
\begin{equation}\label{C4}
f^{p}\left( t\right) \leq \left( \frac{A}{b+A}\right) ^{p}\left( f\left(
t\right) +g\left( t\right) \right) ^{p}.
\end{equation}

Multiplying by $\displaystyle\frac{\rho ^{1-\beta }x^{\kappa }t^{\rho \left( \eta +1\right) -1}}{\Gamma\left( \alpha \right) \left( x^{\rho }-t^{\rho }\right) ^{1-\alpha }}$ both sides of {\rm Eq.(\ref{C3})} and integrating with respect to the variable $t$, we have
\begin{equation*}
\int_{a}^{x}\frac{\rho ^{1-\beta }x^{\kappa }t^{\rho \left( \eta +1\right)
-1}}{\Gamma \left( \alpha \right) \left( x^{\rho }-t^{\rho }\right)
^{1-\alpha }}g^{p}\left( t\right) dt\leq \int_{a}^{x}\frac{\rho ^{1-\beta
}x^{\kappa }t^{\rho \left( \eta +1\right) -1}}{\Gamma \left( \alpha \right)
\left( x^{\rho }-t^{\rho }\right) ^{1-\alpha }}\left( \frac{B}{a+B}\right)
^{p}\left( f\left( t\right) +g\left( t\right) \right) ^{p}dt.
\end{equation*}

Thus, it follows that
\begin{equation}\label{C5}
\left( ^{\rho }\mathcal{I}_{a+,\eta ,\kappa }^{\alpha ,\beta }g^{p}\left( x\right)
\right) ^{\frac{1}{p}}\leq \frac{B}{a+B}\left( ^{\rho }\mathcal{I}_{a+,\eta ,\kappa
}^{\alpha ,\beta }\left( f+g\right) ^{p}\left( x\right) \right) ^{\frac{1}{p}}.
\end{equation}

Similarly, we performe the calculations for {\rm Eq.(\ref{C4})}, we get
\begin{equation}\label{C6}
\left( ^{\rho }\mathcal{I}_{a+,\eta ,\kappa }^{\alpha ,\beta }f^{p}\left( x\right)
\right) ^{\frac{1}{p}}\leq \frac{A}{b+A}\left( ^{\rho }\mathcal{I}_{a+,\eta ,\kappa
}^{\alpha ,\beta }\left( f+g\right) ^{p}\left( x\right) \right) ^{\frac{1}{p}}.
\end{equation}

Adding {\rm Eq.(\ref{C5})} and {\rm Eq.(\ref{C6})}, we conclude that
\begin{equation*}
\left( ^{\rho }\mathcal{I}_{a+,\eta ,\kappa }^{\alpha ,\beta }f^{p}\left( x\right)
\right) ^{\frac{1}{p}}+\left( ^{\rho }\mathcal{I}_{a+,\eta ,\kappa }^{\alpha ,\beta
}g^{p}\left( x\right) \right) ^{\frac{1}{p}}\leq \frac{A\left( a+B\right)
+B\left( b+A\right) }{\left( a+B\right) \left( b+A\right) }\left( ^{\rho
}\mathcal{I}_{a+,\eta ,\kappa }^{\alpha ,\beta }\left( f+g\right) ^{p}\left( x\right)
\right) ^{\frac{1}{p}}.
\end{equation*}
\end{proof}

\begin{theorem} Let $\alpha>0$ and $\rho,\eta,\kappa,\beta\in\mathbb{R}$. Let $f,g\in X^{p}_{c}(a,x)$ be two positive functions in $[0,\infty)$, such that $\forall x>a$, $^{\rho }\mathcal{I}_{a+,\eta ,\kappa }^{\alpha ,\beta }f\left( x\right) <\infty $ and $^{\rho }\mathcal{I}_{a+,\eta ,\kappa }^{\alpha ,\beta }g\left( x\right) <\infty $. If $0<m\leq \displaystyle\frac{f\left( t\right) }{g\left( t\right) }\leq M$, for $m,M\in\mathbb{R}^{*}_{+}$ and $\forall t\in \left[ a,x\right] $, then 
\begin{eqnarray}
\frac{1}{M}\left( ^{\rho }\mathcal{I}_{a+,\eta ,\kappa }^{\alpha ,\beta }f\left(
x\right) g\left( x\right) \right)  &\leq &\frac{1}{\left( m+1\right) \left(
M+1\right) }\left( ^{\rho }\mathcal{I}_{a+,\eta ,\kappa }^{\alpha ,\beta }\left(
f+g\right) ^{2}\left( x\right) \right)   \notag \\
&\leq &\frac{1}{m}\left( ^{\rho }\mathcal{I}_{a+,\eta ,\kappa }^{\alpha ,\beta
}f\left( x\right) g\left( x\right) \right) .
\end{eqnarray}
\end{theorem}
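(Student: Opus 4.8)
The plan is to reduce the double inequality to a pointwise algebraic estimate comparing $(f+g)^{2}$ with $fg$ on $[a,x]$, and then to apply the operator ${}^{\rho}\mathcal{I}_{a+,\eta,\kappa}^{\alpha,\beta}$. Exactly as in the preceding proofs, this operator is integration of its argument against the kernel $\rho^{1-\beta}x^{\kappa}t^{\rho(\eta+1)-1}/[\Gamma(\alpha)(x^{\rho}-t^{\rho})^{1-\alpha}]$ over $t\in[a,x]$, which is nonnegative there; hence ${}^{\rho}\mathcal{I}_{a+,\eta,\kappa}^{\alpha,\beta}$ is a positive linear operator, so $u\le v$ pointwise implies ${}^{\rho}\mathcal{I}_{a+,\eta,\kappa}^{\alpha,\beta}u(x)\le {}^{\rho}\mathcal{I}_{a+,\eta,\kappa}^{\alpha,\beta}v(x)$ whenever the two integrals are finite.

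First I would extract four one-sided bounds from the hypothesis $0<m\le f(t)/g(t)\le M$. From $f(t)\le Mg(t)$ we obtain $f(t)+g(t)\le (M+1)g(t)$ and, rewriting the same inequality as $g(t)\ge f(t)/M$, also $f(t)+g(t)\ge \tfrac{M+1}{M}f(t)$. Symmetrically, from $mg(t)\le f(t)$ we obtain $f(t)+g(t)\ge (m+1)g(t)$ and, rewriting as $g(t)\le f(t)/m$, also $f(t)+g(t)\le \tfrac{m+1}{m}f(t)$.

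Next I would multiply the matching pairs, all quantities being positive: multiplying $f+g\ge (m+1)g$ by $f+g\ge \tfrac{M+1}{M}f$ gives $(f+g)^{2}\ge \tfrac{(m+1)(M+1)}{M}fg$, while multiplying $f+g\le (M+1)g$ by $f+g\le \tfrac{m+1}{m}f$ gives $(f+g)^{2}\le \tfrac{(m+1)(M+1)}{m}fg$. Dividing both through by $(m+1)(M+1)$ yields the pointwise chain
\[
\frac{1}{M}f(t)g(t)\ \le\ \frac{1}{(m+1)(M+1)}\bigl(f(t)+g(t)\bigr)^{2}\ \le\ \frac{1}{m}f(t)g(t),\qquad t\in[a,x].
\]
Multiplying each side by the Katugampola kernel, integrating over $[a,x]$, and invoking the monotonicity recorded above gives precisely the asserted estimate; the finiteness hypotheses on ${}^{\rho}\mathcal{I}_{a+,\eta,\kappa}^{\alpha,\beta}f$ and ${}^{\rho}\mathcal{I}_{a+,\eta,\kappa}^{\alpha,\beta}g$ together with $m\le f/g\le M$ ensure that all three integrals converge.

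I do not anticipate a genuine obstacle: this follows the same ``establish a pointwise inequality, then integrate against the positive kernel'' scheme used repeatedly above. The only points needing care are (i) pairing the four one-sided bounds so that the product on the right is $fg$ rather than $f^{2}$ or $g^{2}$, which forces the particular combination displayed above, and (ii) noting explicitly that ${}^{\rho}\mathcal{I}_{a+,\eta,\kappa}^{\alpha,\beta}$ preserves inequalities, which is immediate from the sign of its kernel on $[a,x]$.
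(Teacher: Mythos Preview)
Your proposal is correct and follows essentially the same route as the paper: derive the two chains $(m+1)g\le f+g\le (M+1)g$ and $\tfrac{M+1}{M}f\le f+g\le \tfrac{m+1}{m}f$, multiply the matching pairs to obtain the pointwise estimate $\tfrac{1}{M}fg\le \tfrac{1}{(m+1)(M+1)}(f+g)^{2}\le \tfrac{1}{m}fg$, and then integrate against the nonnegative Katugampola kernel. Your explicit remark that ${}^{\rho}\mathcal{I}_{a+,\eta,\kappa}^{\alpha,\beta}$ is order-preserving is a welcome clarification of a step the paper leaves implicit.
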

\begin{proof}

Being $0<m\leq \displaystyle\frac{f\left( t\right) }{g\left( t\right) }\leq M$, $\forall t\in [a,x]$, we have
\begin{equation}\label{E1}
g\left( t\right) \left( m+1\right) \leq g\left( t\right) +f\left( t\right)
\leq g\left( t\right) \left( M+1\right). 
\end{equation}

Also, it follows that $\displaystyle\frac{1}{M}\leq \displaystyle\frac{g\left( t\right) }{f\left( t\right) }\leq \displaystyle\frac{1}{m}$, which implies,
\begin{equation}\label{E2}
g\left( t\right) \left( \frac{M+1}{M}\right) \leq g\left( t\right) +f\left(
t\right) \leq g\left( t\right) \left( \frac{m+1}{m}\right) .
\end{equation}

Evaluating the product between {\rm Eq.(\ref{E1})} and {\rm Eq.(\ref{E2})}, we have
\begin{equation}\label{E3}
\frac{f\left( t\right) g\left( t\right) }{M}\leq \frac{\left( g\left(
t\right) +f\left( t\right) \right) ^{2}}{\left( m+1\right) \left( M+1\right) 
}\leq \frac{f\left( t\right) g\left( t\right) }{m}.
\end{equation}

Multiplying by $\displaystyle\frac{\rho ^{1-\beta }x^{\kappa }t^{\rho \left( \eta +1\right) -1}}{\Gamma \left( \alpha \right) \left( x^{\rho }-t^{\rho }\right) ^{1-\alpha }}$ both sides of {\rm Eq.(\ref{E3})} and integrating with respect to the variable $t$, we have
\begin{eqnarray*}
\frac{\rho ^{1-\beta }x^{\kappa }}{M\Gamma \left( \alpha \right) }%
\int_{a}^{x}\frac{t^{\rho \left( \eta +1\right) -1}}{\left( x^{\rho
}-t^{\rho }\right) ^{1-\alpha }}f\left( t\right) g\left( t\right) dt &\leq
&c_{6}\frac{\rho ^{1-\beta }x^{\kappa }}{\Gamma \left( \alpha \right) }%
\int_{a}^{x}\frac{t^{\rho \left( \eta +1\right) -1}}{\left( x^{\rho
}-t^{\rho }\right) ^{1-\alpha }}\left( g\left( t\right) +f\left( t\right)
\right) ^{2}dt \\
&\leq &\frac{\rho ^{1-\beta }x^{\kappa }}{m\Gamma \left( \alpha \right) }%
\int_{a}^{x}\frac{t^{\rho \left( \eta +1\right) -1}}{\left( x^{\rho
}-t^{\rho }\right) ^{1-\alpha }}f\left( t\right) g\left( t\right) dt,
\end{eqnarray*}
with $c_{6}=\displaystyle\frac{1}{\left( m+1\right) \left( M+1\right) }$.

Thus, we conclude that
\begin{eqnarray*}
\frac{1}{M}\left( ^{\rho }\mathcal{I}_{a+,\eta ,\kappa }^{\alpha ,\beta }f\left(
x\right) g\left( x\right) \right)  &\leq &\frac{1}{\left( m+1\right) \left(
M+1\right) }\left( ^{\rho }\mathcal{I}_{a+,\eta ,\kappa }^{\alpha ,\beta }\left(
g\left( x\right) +f\left( x\right) \right) ^{2}\right)  \\
&\leq &\frac{1}{m}\left( ^{\rho }\mathcal{I}_{a+,\eta ,\kappa }^{\alpha ,\beta
}f\left( x\right) g\left( x\right) \right). 
\end{eqnarray*}
\end{proof}

\begin{theorem} Let $\alpha>0$, $\rho,\eta,\kappa,\beta\in\mathbb{R}$ and $p\geq 1$. Let $f,g\in X^{p}_{c}(a,x)$ be two positive functions in $[0,\infty)$, such that $\forall x>a$, $^{\rho }\mathcal{I}_{a+,\eta ,\kappa }^{\alpha ,\beta }f^{p}\left( x\right) <\infty $ and $^{\rho }\mathcal{I}_{a+,\eta ,\kappa }^{\alpha ,\beta }g^{p}\left( x\right) <\infty $. If $0<m\leq \displaystyle\frac{f\left( t\right) }{g\left( t\right) }\leq M$, for $m,M\in\mathbb{R}^{*}_{+}$ and $\forall t\in \left[ a,x\right] $, then
\begin{equation}
\left( ^{\rho }\mathcal{I}_{a+,\eta ,\kappa }^{\alpha ,\beta }f^{p}\left( x\right)
\right) ^{\frac{1}{p}}+\left( ^{\rho }\mathcal{I}_{a+,\eta ,\kappa }^{\alpha ,\beta
}g^{p}\left( x\right) \right) ^{\frac{1}{p}}\leq 2\left( ^{\rho }\mathcal{I}_{a+,\eta
,\kappa }^{\alpha ,\beta }h^{p}\left( f\left( x\right) ,g\left( x\right)
\right) \right) ^{\frac{1}{p}},  \notag
\end{equation}
with $h\left( f\left( x\right) ,g\left( x\right) \right) =\max \left\{ M\left[
\left( \displaystyle\frac{M}{m}+1\right) f\left( x\right) -Mg\left( x\right) \right] ,\displaystyle\frac{\left( m+M\right) g\left( x\right) -f\left( x\right) }{m}\right\} $.
\end{theorem}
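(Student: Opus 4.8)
The plan is to mimic the proofs of Theorems~8 and~11: first reduce the statement to a \emph{pointwise} inequality between the integrands, and then multiply by the nonnegative kernel of ${}^{\rho}\mathcal{I}_{a+,\eta,\kappa}^{\alpha,\beta}$ and integrate in $t$ over $[a,x]$. Write for brevity $h(t):=h\!\left(f(t),g(t)\right)$. The crux is to establish that, for every $t\in[a,x]$,
$$\max\{f(t),g(t)\}\le h(t),$$
i.e. both $f(t)\le h(t)$ and $g(t)\le h(t)$.

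For the bound on $g$: from $\frac{f(t)}{g(t)}\le M$ we get $f(t)\le Mg(t)$, hence $mg(t)\le (m+M)g(t)-f(t)$, that is, $g(t)\le\frac{(m+M)g(t)-f(t)}{m}$, so $g(t)$ is dominated by the second entry in the maximum defining $h(t)$, and \emph{a fortiori} $g(t)\le h(t)$. For the bound on $f$ I would split into two cases. If $M\ge 1$, then from $m\le\frac{f(t)}{g(t)}$ we get $g(t)\le\frac{f(t)}{m}$, whence
$$M\!\left[\left(\tfrac{M}{m}+1\right)f(t)-Mg(t)\right]\ \ge\ M\!\left[\left(\tfrac{M}{m}+1\right)f(t)-\tfrac{M}{m}f(t)\right]\ =\ Mf(t)\ \ge\ f(t),$$
so $f(t)$ is dominated by the first entry in the maximum, hence $f(t)\le h(t)$. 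If $M\le 1$, then $f(t)\le Mg(t)\le g(t)$, and by the bound just obtained for $g$ we again get $f(t)\le g(t)\le h(t)$. This settles the pointwise claim; note also $h(t)\ge g(t)>0$, so $h^{p}$ causes no trouble.

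With $f(t)\le h(t)$ and $g(t)\le h(t)$ in hand, raising to the power $p\ge 1$ gives $f^{p}(t)\le h^{p}(t)$ and $g^{p}(t)\le h^{p}(t)$ for all $t\in[a,x]$. Multiplying each of these by $\dfrac{\rho^{1-\beta}x^{\kappa}t^{\rho(\eta+1)-1}}{\Gamma(\alpha)\left(x^{\rho}-t^{\rho}\right)^{1-\alpha}}\ge 0$ and integrating with respect to $t$ over $[a,x]$ yields
$${}^{\rho}\mathcal{I}_{a+,\eta,\kappa}^{\alpha,\beta}f^{p}(x)\ \le\ {}^{\rho}\mathcal{I}_{a+,\eta,\kappa}^{\alpha,\beta}h^{p}\!\left(f(x),g(x)\right)\qquad\text{and}\qquad {}^{\rho}\mathcal{I}_{a+,\eta,\kappa}^{\alpha,\beta}g^{p}(x)\ \le\ {}^{\rho}\mathcal{I}_{a+,\eta,\kappa}^{\alpha,\beta}h^{p}\!\left(f(x),g(x)\right).$$
Taking the order-preserving $\tfrac1p$-th power of each inequality and adding the two resulting estimates gives
$$\left({}^{\rho}\mathcal{I}_{a+,\eta,\kappa}^{\alpha,\beta}f^{p}(x)\right)^{\frac1p}+\left({}^{\rho}\mathcal{I}_{a+,\eta,\kappa}^{\alpha,\beta}g^{p}(x)\right)^{\frac1p}\ \le\ 2\left({}^{\rho}\mathcal{I}_{a+,\eta,\kappa}^{\alpha,\beta}h^{p}\!\left(f(x),g(x)\right)\right)^{\frac1p},$$
which is the desired inequality. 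The only genuinely non-routine point is the pointwise estimate $\max\{f,g\}\le h$, and inside it the case $M\ge 1$ for the bound on $f$; once that is secured, the rest is the by-now-standard ``multiply by the kernel and integrate'' manipulation used throughout the paper.
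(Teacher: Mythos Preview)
Your proof is correct and follows essentially the same strategy as the paper's: establish the pointwise bounds $f(t)\le h(t)$ and $g(t)\le h(t)$, raise to the $p$-th power, multiply by the nonnegative kernel of ${}^{\rho}\mathcal{I}_{a+,\eta,\kappa}^{\alpha,\beta}$, integrate, take $p$-th roots, and add. The derivations of $g(t)\le h(t)$ are identical in spirit. For $f(t)\le h(t)$ your argument is in fact slightly more careful than the paper's: the paper first shows $f(t)\le\bigl(\tfrac{M}{m}+1\bigr)f(t)-Mg(t)$ and then passes to $M\bigl[\bigl(\tfrac{M}{m}+1\bigr)f(t)-Mg(t)\bigr]$ by multiplying this positive quantity by $M$, a step which tacitly requires $M\ge 1$. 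Your case split ($M\ge 1$ versus $M\le 1$, with the observation $f\le Mg\le g\le h$ in the latter case) covers the range $0<M<1$ that the paper's chain of inequalities does not.
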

\begin{proof} From the hypothesis, $0<m\leq \displaystyle\frac{f\left( t\right) }{g\left( t\right) }\leq M$, $\forall t\in [a,x]$, we have
\begin{equation}\label{F1}
0<m\leq M+m-\frac{f\left( t\right) }{g\left( t\right) }
\end{equation}
and
\begin{equation}\label{F2}
M+m-\frac{f\left( t\right) }{g\left( t\right) }\leq M.
\end{equation}

Thus, using {\rm Eq.(\ref{F1})} and {\rm Eq.(\ref{F2})}, we get
\begin{equation}\label{F3}
g\left( t\right) <\frac{\left( M+m\right) g\left( t\right) -f\left( t\right) 
}{m}\leq h\left( f\left( t\right) ,g\left( t\right) \right) ,
\end{equation}
where $h\left( f\left( t\right) ,g\left( t\right) \right) =\max \left\{ M\left[
\left( \frac{M}{m}+1\right) f\left( t\right) -Mg\left( t\right) \right] ,%
\frac{\left( M+m\right) g\left( t\right) -f\left( t\right) }{m}\right\}. $

Using the hypothesis, it follows that $0<\displaystyle\frac{1}{M}\leq\displaystyle \frac{g\left( t\right) }{f\left( t\right) }\leq \displaystyle\frac{1}{m}$. In this way, we obtain
\begin{equation}\label{F4}
\frac{1}{M}\leq \frac{1}{M}+\frac{1}{m}-\frac{g\left( t\right) }{f\left(
t\right) }
\end{equation}
and
\begin{equation}\label{F5}
\frac{1}{M}+\frac{1}{m}-\frac{g\left( t\right) }{f\left( t\right) }\leq 
\frac{1}{m}.
\end{equation}

Then, from {\rm Eq.(\ref{F4})} and {\rm Eq.(\ref{F5})}, we have
\begin{equation*}
\frac{1}{M}\leq \frac{\left( \frac{1}{m}+\frac{1}{M}\right) f\left( t\right)
-g\left( t\right) }{f\left( t\right) }\leq \frac{1}{m},
\end{equation*}

which can be rewrite as
\begin{eqnarray}\label{F6}
f\left( t\right)  &\leq &M\left( \frac{1}{m}+\frac{1}{M}\right) f\left(
t\right) -Mg\left( t\right)   \notag \\
&=&\frac{M\left( M+m\right) f\left( t\right) -M^{2}mg\left( t\right) }{Mm} 
\notag \\
&=&\left( \frac{M}{m}+1\right) f\left( t\right) -Mg\left( t\right)   \notag
\\
&\leq &M\left[ \left( \frac{M}{m}+1\right) f\left( t\right) -Mg\left(
t\right) \right]   \notag \\
&\leq &h\left( f\left( t\right) ,g\left( t\right) \right) .
\end{eqnarray}

Thus, using {\rm Eq.(\ref{F3})} and {\rm Eq.(\ref{F6})}, we can write
\begin{equation}\label{F7}
f^{p}\left( t\right) \leq h^{p}\left( f\left( t\right) ,g\left( t\right)\right) 
\end{equation}
and
\begin{equation}\label{F8}
g^{p}\left( t\right) \leq h^{p}\left( f\left( t\right) ,g\left( t\right)\right). 
\end{equation}

Multiplying by $\displaystyle\frac{\rho ^{1-\alpha }x^{\kappa }t^{\rho \left( \eta +1\right) -1}}{\Gamma \left( \alpha \right) \left( x^{\rho }-t^{\rho }\right) ^{1-\alpha }}$ both sides of {\rm Eq.(\ref{F7})} and integrating with respect to the variable $t$, we have
\begin{equation*}
\int_{a}^{x}\frac{\rho ^{1-\alpha }x^{\kappa }t^{\rho \left( \eta +1\right)
-1}}{\Gamma \left( \alpha \right) \left( x^{\rho }-t^{\rho }\right)
^{1-\alpha }}f^{p}\left( t\right) dt\leq \int_{a}^{x}\frac{\rho ^{1-\alpha
}x^{\kappa }t^{\rho \left( \eta +1\right) -1}}{\Gamma \left( \alpha \right)
\left( x^{\rho }-t^{\rho }\right) ^{1-\alpha }}h^{p}\left( f\left( t\right)
,g\left( t\right) \right) dt.
\end{equation*}

In this way, we obtain
\begin{equation}\label{F9}
\left( ^{\rho }\mathcal{I}_{a+,\eta ,\kappa }^{\alpha ,\beta }f^{p}\left( x\right)
\right) ^{\frac{1}{p}}\leq \left( ^{\rho }\mathcal{I}_{a+,\eta ,\kappa }^{\alpha
,\beta }h^{p}\left( f\left( x\right) ,g\left( x\right) \right) \right) ^{%
\frac{1}{p}}.
\end{equation}

Using the same procedure as above, for {\rm Eq.(\ref{F8})}, we have
\begin{equation}\label{F10}
\left( ^{\rho }\mathcal{I}_{a+,\eta ,\kappa }^{\alpha ,\beta }g^{p}\left( x\right)
\right) ^{\frac{1}{p}}\leq \left( ^{\rho }\mathcal{I}_{a+,\eta ,\kappa }^{\alpha
,\beta }h^{p}\left( f\left( x\right) ,g\left( x\right) \right) \right) ^{%
\frac{1}{p}}.
\end{equation}

Thus, using {\rm Eq.(\ref{F9})} and {\rm Eq.(\ref{F10})}, we conclude that
\begin{equation*}
\left( ^{\rho }\mathcal{I}_{a+,\eta ,\kappa }^{\alpha ,\beta }f^{p}\left( x\right)
\right) ^{\frac{1}{p}}+\left( ^{\rho }\mathcal{I}_{a+,\eta ,\kappa }^{\alpha ,\beta
}g^{p}\left( x\right) \right) ^{\frac{1}{p}}\leq 2\left( ^{\rho }I_{a+,\eta
,\kappa }^{\alpha ,\beta }h^{p}\left( f\left( x\right) ,g\left( x\right)
\right) \right) ^{\frac{1}{p}}.
\end{equation*}

\end{proof}

Using Eq.(\ref{A19}) and Theorem 7 with the convenient conditions for each respective fractional integral, we have the previous theorems, that is, Theorem 10 to Theorem 15 introduced and demonstrated above, contain as particular cases, each result involving the following fractional integrals: Riemann-Liouville, Hadamard, Liouville, Weyl, Edérlyi-Kober, and Katugampola.

\section{Concluding remarks}

After a brief introduction to the fractional integral, proposed by Katugampola and fractional integrals in the sense of Riemann-Liouville and Hadamard, we generalize the reverse Minkowski's inequality obtaining, as a particular case, the inequality involving the fractional integral in the Riemann-Liouville sense and Hadamard sense \cite{UNT}. We also show other inequalities using the Katugampola fractional integral. The application of this fractional integral can be used to generalize several inequalities, among them, we mention the Gruss-type inequality, recently introduced and proved \cite{JDE}. A continuation of this work, with this formulation of fractional integral, consists in generalize the inequalities of Hermite-Hadamard and Hermite-Hadamard-Féjer. Moreover, we will discuss inequalities via $M$-fractional integral according to \cite{JEC}.

\bibliography{ref}
\bibliographystyle{plain}

\end{document}